\theoremstyle{plain}
\makeatletter\@namedef{subjclassname@2010}{\textup{2010} Mathematics Subject Classification}
\newtheorem{Thm}{Theorem}[section]
\newtheorem{Lem}[Thm]{Lemma}
\newtheorem{Cor}[Thm]{Corollary}
\newtheorem{Pro}[Thm]{Proposition}
\theoremstyle{definition}
\newtheorem{Def}[Thm]{Definition}
\theoremstyle{remark}
\newtheorem{Rem}[Thm]{Remark}
\numberwithin{equation}{section}
\newcommand{\ITE}[3]{\ifthenelse{#1}{#2}{#3}}\newcommand{\ITEE}[4][]{\ITE{\equal{#2}{#3}}{#4}{#1}}
\newcommand{\myData}[1][]{
 \author[P.\ Niemiec]{Piotr Niemiec}
 \address{\ITEE{#1}{*}{P.\ Niemiec{}\\}Instytut Matematyki\\
  Wydzia\l{} Matematyki i~Informatyki\\Uniwersytet Jagiello\'{n}ski\\
  ul.\ \L{}ojasiewicza 6\\30-348 Krak\'{o}w\\Poland}
 \email{piotr.niemiec@uj.edu.pl}
 }
\newenvironment{cor}[2][]{\ITEE[{\begin{Cor}[#1]}]{#1}{}{\begin{Cor}}\label{cor:#2}}{\end{Cor}}
\newenvironment{dfn}[2][]{\ITEE[{\begin{Def}[#1]}]{#1}{}{\begin{Def}}\label{def:#2}}{\end{Def}}
\newenvironment{lem}[2][]{\ITEE[{\begin{Lem}[#1]}]{#1}{}{\begin{Lem}}\label{lem:#2}}{\end{Lem}}
\newenvironment{pro}[2][]{\ITEE[{\begin{Pro}[#1]}]{#1}{}{\begin{Pro}}\label{pro:#2}}{\end{Pro}}
\newenvironment{rem}[2][]{\ITEE[{\begin{Rem}[#1]}]{#1}{}{\begin{Rem}}\label{rem:#2}}{\end{Rem}}
\newenvironment{thm}[2][]{\ITEE[{\begin{Thm}[#1]}]{#1}{}{\begin{Thm}}\label{thm:#2}}{\end{Thm}}
\newcommand{\COR}[2][!]{\ITEE{#1}{!}{Corollary~}\ITEE{#1}{s}{Corollaries~}\textup{\ref{cor:#2}}}
\newcommand{\LEM}[2][!]{\ITEE{#1}{!}{Lemma~}\ITEE{#1}{s}{Lemmas~}\textup{\ref{lem:#2}}}
\newcommand{\THM}[2][!]{\ITEE{#1}{!}{Theorem~}\ITEE{#1}{s}{Theorems~}\textup{\ref{thm:#2}}}
\newcommand{\RRR}{\mathbb{R}}
\newcommand{\SSS}{\mathbb{S}}
\newcommand{\dd}{\colon}
\newcommand{\epsi}{\varepsilon}
\newcommand{\geqsl}{\geqslant}
\newcommand{\leqsl}{\leqslant}
\newcommand{\scalar}[2]{\left\langle#1,#2\right\rangle}
\newcommand{\Iso}{\operatorname{Iso}}
\newcommand{\stab}{\operatorname{stab}}
\newcommand{\tfcae}{the following conditions are equivalent:}
\begin{document}

\title[Hyperbolic geometry for non-differential topologists]
 {Hyperbolic geometry\\for non-differential topologists}
\myData[*]
\author[P. Pikul]{Piotr Pikul}
\address{P. Pikul\\Instytut Matematyki\\
 Wydzia\l{} Matematyki i~Informatyki\\Uniwersytet Jagiello\'{n}ski\\
 ul.\ \L{}ojasiewicza 6\\30-348 Krak\'{o}w\\Poland}
\email{piotr.pikul@student.uj.edu.pl}
\begin{abstract}
A \textsl{soft} presentation of hyperbolic spaces, free of differential
apparatus, is offered. Fifth Euclid's postulate in such spaces is overthrown
and, among other things, it is proved that spheres (equipped with great-circle
distances) and hyperbolic and Euclidean spaces are the only locally compact
geodesic (i.e., convex) metric spaces that are three-point homogeneous.
\end{abstract}
\subjclass[2010]{Primary 51F99; Secondary 51-01.}
\keywords{Hyperbolic geometry; non-Euclidean geometry; fifth Euclid's postulate;
 parallel postulate; free mobility; three-point homogeneous space.}
\maketitle


\section{Introduction}

Hyperbolic geometry of a plane is known as a historically first example of
a non-Euclidean geometry; that is, geometry in which all Euclid's postulates are
satisfied, apart from the fifth, called \textsl{parallel}, which is false.
Discovered in the first half of the 19th century, is one of the greatest
mathematical achievements of those times. Although it merits special attention
and mathematicians all over the world hear about it sooner or later, there are
plenty of them whose knowledge on hyperbolic geometry is greedy and far from
formal details. This sad truth concerns also topologists which do not specialise
in differential geometry. One of reasons for this state is concerned with
the extent of differential apparatus that one needs to learn in order to get to
know and understand hyperbolic spaces. It was the main sake for us to propose
and prepare an introduction to hyperbolic spaces (and geometry) that will be
self-contained, free of differential language and accessible to `everyone.'\par
The other story about hyperbolic spaces concerns their \textsl{free mobility}
(or, in other words, \textsl{absolute} metric \textsl{homogeneity}). A metric
space is absolutely homogeneous if all its partial isometries (that is,
isometries between its subspaces) extend to global (bijective) isometries.
According to a deep result from the 50's of the 20th century, known almost only
by differential/Riemannian geometrists, hyperbolic spaces, beside Euclidean
spaces and Euclidean spheres, are (in a very strong sense) the only connected
locally compact metric spaces that have this property. One may even assume less
about a connected locally compact metric space---that only partial isometries
between $3$-point and $2$-point subspaces extend to global isometries. Then such
a space is `equivalent' to one of the aforementioned Riemannian manifolds and
therefore is automatically absolutely homogeneous (the equivalence we speak here
about does not imply that spaces are isometric, but is much stronger than
a statement that they are homeomorphic; full details about classification of all
such spaces up to isometry can be found in Section~5 below---see
\THM{classification}). So, high level of metric homogeneity makes hyperbolic
spaces highly exceptional. This is another reason for putting special attention
on them.\par
The paper is organised as follows. In Section~2 we introduce (one of possible
models of) hyperbolic spaces and prove that their metrics satisfy the triangle
inequality and are equivalent to Euclidean metrics. In the next, third, section
we show that hyperbolic spaces are absolutely homogeneous and admit no dilations
other than isometries (for dimension greater than $1$). Fourth section is
devoted to Euclid's postulates, where it is proved that parallel postulate is
false in hyperbolic spaces, whereas all other postulates are satisfied. To make
the presentation self-contained, we also define there straight lines and their
segments as well as angles, and show that one-dimensional hyperbolic space is
isometric to the ordinary real line. The last, fifth, section is devoted to
the classification (up to isometry) of all connected locally compact metric
spaces that are $3$-point homogeneous. All proofs are included, apart from
the proof of a deep and difficult result due to Freudenthal \cite{fr1} (see
\THM{freu}). This result is applied only once in this paper---to classify
$3$-point homogeneous metric spaces described above.\par
The reader interested in Riemannian geometry may consult, e.g., \cite{mor} or
\cite{cha}.

\subsection*{Notation and terminology}
In this paper the term \textsl{metric} means a function (in two variables) that
assigns to a pair of points of a fixed set their distance (so, \textsl{metric}
does \underline{not} mean \textsl{Riemannian metric}, a common and important
notion in differential geometry).\par
For a pair of vectors $x = (x_1,\ldots,x_n)$ and $y = (y_1,\ldots,y_n)$
in $\RRR^n$, we denote by $\scalar{x}{y}$ their standard inner product (that is,
$\scalar{x}{y} = \sum_{k=1}^n x_k y_k$), whereas $\|x\| = \sqrt{\scalar{x}{x}}$
is the Euclidean norm of $x$. The Euclidean metric (induced by $\|\cdot\|$) will
be denoted as $d_e$. To simplify further arguments and statements, we introduce
the following notation:
\begin{equation}\label{eqn:bracket}
[x] = \sqrt{1+\|x\|^2}.
\end{equation}\par
By an \textit{isometric} map between metric spaces we mean any function that
preserves the distances, that is, $f\dd (X,d_X) \to (Y,d_Y)$ is isometric if
$d_Y(f(p),f(q)) = d_X(p,q)$ for any $p, q \in X$. The term \textit{isometry} is
reserved for surjective isometric maps. Additionally, the above map $f$ is
a \textit{dilation} if there is a constant $c > 0$ such that $d_Y(f(p),f(q)) =
c d_X(p,q)$ for any $p, q \in X$ (we do not assume that dilations are
surjective). We will denote by $\Iso(X,d_X)$ the full isometry group of
$(X,d_X)$; that is,
\[\Iso(X,d_X) = \{u\dd (X,d_X) \to (X,d_X),\quad u \textup{ isometry}\}.\]
A metric space $(X,d_X)$ is said to be \textit{geodesic} (or \textit{convex}) if
for any two distinct points $a$ and $b$ of $X$ there exists a dilation
$\gamma\dd [0,1] \to (X,d_X)$ such that $\gamma(0) = a$ and $\gamma(1) = b$ (we
do not assume the uniqueness of such $\gamma$).\par
For the reader's convenience, let us recall that the inverse hyperbolic cosine
is defined as $\cosh^{-1}(t) = \log(t + \sqrt{t^2-1})$ for $t \geqsl 1$ (in
whole this paper `$\log$' stands for the natural logarithm).

\section{Hyperbolic distance}

Below we introduce one of many equivalent models of hyperbolic spaces---the one
most convenient for us.

\begin{dfn}{hyper}
The \textit{$n$-dimensional} (real) \textit{hyperbolic space} is a metric space
\[(H^n(\RRR),d_h)\] where $H^n(\RRR) = \RRR^n$ and $d_h$ is a metric (called
\textit{hyperbolic}) given by
\begin{equation}\label{eqn:metric}
d_h(x,y) = \cosh^{-1}([x][y] - \scalar{x}{y}) \qquad (x,y \in \RRR^n)
\end{equation}
(see \eqref{eqn:bracket}).
\end{dfn}

The above formula has its origin in differential/Riemannian geometry (most often
it is defined as the length of a geodesic arc---so, to get it one needs to find
geodesics and compute integrals related to them), see, e.g., \cite{rey}. We
underline here---at the very beginning of our presentation---that establishing
the triangle inequality for $d_h$ using elementary methods is undoubtedly
the most difficult part in whole this approach.

\begin{rem}{hyper}
As we will see in \COR{n=1}, the metric space $(H^1(\RRR),d_h)$ is isometric to
$(\RRR,d_e)$. This contrasts with all other cases, as for $n > 1$ the metric
space $(H^n(\RRR),d_h)$ admits no dilations other than isometries (see
\THM{dilation} below). In particular, for any integer $n > 1$ and positive real
$r \neq 1$ the metric spaces $(H^n(\RRR),d_h)$ and $(H^n(\RRR),r d_h)$ are
\textsl{different} (i.e., they are non-isometric). Each of the metrics $r d_h$
(with fixed $r > 0$) may serve as a `standard' hyperbolic metric. Actually,
\textsl{everything} that will be proved in this paper about the metric spaces
$(H^n(\RRR),d_h)$ remains true when the metric is replaced by $r d_h$.\par
Note also that, similarly as practiced with Euclidean spaces, for $j < k$ the
space $H^j(\RRR)$ can naturally be considered as the subspace $\RRR^j \times
\{0\}^{k-j}$ of $H^k(\RRR)$. Under this identification, the hyperbolic metric
of $H^j(\RRR)$ coincides with the metric induced from the hyperbolic one of
$H^k(\RRR)$. This is the main reason why we `forget' the dimension $n$ in
the notations `$d_h$' and `$d_e$.'
\end{rem}

The aim of this section is to show that $d_h$ is a metric on $\RRR^n$ equivalent
to $d_e$.

\begin{lem}{well}
For any $x, y \in H^n(\RRR)$, $d_h(x,y)$ is well defined, non-negative and
$d_h(x,y) = d_h(y,x)$. Moreover, $d_h(x,y) = 0$ iff $x = y$.
\end{lem}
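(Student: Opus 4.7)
The plan is to verify the four claims (well-definedness, non-negativity, symmetry, and the identity of indiscernibles) in turn. The only genuinely non-trivial one is well-definedness, since $\cosh^{-1}$ is defined on $[1,\infty)$. That is, the whole statement reduces to
\[
[x][y] - \scalar{x}{y} \geqsl 1 \qquad (x, y \in \RRR^n)
\]
together with the characterisation of its equality case. Once this is in hand, symmetry is inherited from the symmetry of $[\cdot][\cdot]$ and of $\scalar{\cdot}{\cdot}$; non-negativity follows because $\cosh^{-1} \geqsl 0$ on its domain; and $d_h(x,y) = 0$ amounts exactly to equality above.

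To prove the inequality I would split on the sign of $1 + \scalar{x}{y}$. If $1 + \scalar{x}{y} \leqsl 0$, the estimate is trivial because $[x][y] \geqsl 1 > 0 \geqsl 1 + \scalar{x}{y}$, and strict inequality shows that equality is impossible in this range. If $1 + \scalar{x}{y} > 0$, both sides of $[x][y] \geqsl 1 + \scalar{x}{y}$ are non-negative, so one may square and reduce the task to $(1+\|x\|^2)(1+\|y\|^2) \geqsl (1+\scalar{x}{y})^2$. Expanding and regrouping collapses the difference of the two sides into
\[
\|x-y\|^2 + \bigl(\|x\|^2 \|y\|^2 - \scalar{x}{y}^2\bigr),
\]
a sum of a square and a term that is non-negative by the Cauchy--Schwarz inequality. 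This one little identity is really the entire content of the lemma; it is the natural candidate for the ``main obstacle,'' but it is in fact routine.

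For the equality case, one reads off from the same identity that $d_h(x,y)=0$ forces the first summand $\|x-y\|^2$ to vanish, hence $x = y$. Conversely, substituting $y = x$ gives $[x]^2 - \scalar{x}{x} = (1+\|x\|^2) - \|x\|^2 = 1$, so $d_h(x,x) = \cosh^{-1}(1) = 0$, which completes the proof.
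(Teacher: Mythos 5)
Your proof is correct and follows essentially the same route as the paper's: both reduce everything to $(1+\|x\|^2)(1+\|y\|^2) \geqsl (1+\scalar{x}{y})^2$, whose difference of sides is exactly $\|x-y\|^2 + (\|x\|^2\|y\|^2 - \scalar{x}{y}^2)$, handled by Cauchy--Schwarz, with the same reading-off of the equality case. Your explicit case split on the sign of $1+\scalar{x}{y}$ is a slightly more careful presentation of the paper's ``taking square roots from both sides'' step, but the content is identical.
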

\begin{proof}
It follows from the Schwarz inequality that
\begin{equation}\label{Schwarz}
\|x\|^2 \|y\|^2 + \|x-y\|^2 \geqsl \scalar{x}{y}^2.
\end{equation}
Equivalently, $\|x\|^2 \|y\|^2 + \|x\|^2 - 2\scalar{x}{y} + \|y\|^2 \geqsl
\scalar{x}{y}^2$, which gives $1 + \|x\|^2 + \|y\|^2 + \|x\|^2 \|y\|^2 \geqsl
1 + 2\scalar{x}{y} + \scalar{x}{y}^2$ and hence $(1+\|x\|^2)(1+\|y\|^2) \geqsl
(1+\scalar{x}{y})^2$. Taking square roots from both sides shows that $[x][y]
- \scalar{x}{y} \geqsl 1$ and thus $d_h(x,y)$ is well defined (and, of course,
non-negative). Further, we see from \eqref{Schwarz} that $[x][y] - \scalar{x}{y}
= 1$ iff $x = y$, which gives the last claim of the lemma. Symmetry is trivial.
\end{proof}

To establish the triangle inequality, first we show its special case, which will
be used later in the proof of a general case.

\begin{lem}{tri_ineq}
For any $x, y \in H^n(\RRR)$, $d_h(x,y) \leqsl d_h(x,0) + d_h(0,y)$, and
equality appears iff either $x = ty$ or $y = tx$ for some $t \leqsl 0$.
\end{lem}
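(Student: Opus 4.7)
The plan is to apply $\cosh$ to both sides of the desired inequality and exploit the addition formula $\cosh(a+b)=\cosh(a)\cosh(b)+\sinh(a)\sinh(b)$, which will reduce everything to a Cauchy--Schwarz-type inequality. Since $\cosh$ is strictly increasing on $[0,\infty)$ and all three distances involved are non-negative, the inequality $d_h(x,y)\leqsl d_h(x,0)+d_h(0,y)$ is equivalent to
\[ [x][y] - \scalar{x}{y} \;\leqsl\; \cosh\bigl(d_h(x,0)+d_h(0,y)\bigr), \]
with equality in one if and only if equality holds in the other.

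First I would unpack the right-hand side. Because $[0]=1$ and $\scalar{x}{0}=0$, the definition of $d_h$ gives $\cosh(d_h(x,0))=[x]$, hence $\sinh(d_h(x,0))=\sqrt{[x]^2-1}=\|x\|$; analogously $\cosh(d_h(0,y))=[y]$ and $\sinh(d_h(0,y))=\|y\|$. Plugging these into the addition formula yields
\[ \cosh\bigl(d_h(x,0)+d_h(0,y)\bigr) \;=\; [x][y] + \|x\|\|y\|. \]
The desired inequality therefore collapses to $-\scalar{x}{y}\leqsl\|x\|\|y\|$, i.e.\ $\scalar{x}{y}\geqsl -\|x\|\|y\|$, which is just Cauchy--Schwarz applied to $x$ and $-y$ (or equivalently to $x$ and $y$).

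For the equality clause, equality in the main inequality corresponds, by the above chain of equivalences, to the case $\scalar{x}{y}=-\|x\|\|y\|$ in Cauchy--Schwarz. The standard equality analysis of Cauchy--Schwarz says this happens exactly when $x$ and $y$ are linearly dependent with a non-positive proportionality constant, which is precisely the condition that either $x=ty$ or $y=tx$ for some $t\leqsl 0$; conversely, any such pair is easily checked to saturate the inequality.

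There is essentially no hard step here: the only real content is the observation that the $d_h$-distance to the origin has transparent hyperbolic-function values, so that the $\cosh$ addition formula converts the triangle inequality through $0$ into an instance of Cauchy--Schwarz. The trickiest bookkeeping is keeping track of the signs in the equality case, but this is handled cleanly by the equality statement of Cauchy--Schwarz.
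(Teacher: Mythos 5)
Your proof is correct and follows essentially the same route as the paper's: both reduce the claim to the identity $\cosh(d_h(x,0)+d_h(0,y)) = [x]\,[y]+\|x\|\,\|y\|$ and then invoke the Cauchy--Schwarz inequality, tracking the equality case through the equality case of Cauchy--Schwarz. The only difference is cosmetic---you derive that identity from the $\cosh$ addition formula, while the paper expands $\cosh^{-1}$ as a logarithm and multiplies out the resulting expressions.
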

\begin{proof}
Observe that
\begin{multline*}
d_h(x,0) + d_h(0,y) = \cosh^{-1}([x]) + \cosh^{-1}([y])\\= \log([x]+\|x\|) +
\log([y]+\|y\|) = \log([x][y]+[x]\,\|y\|+[y]\,\|x\|+\|x\|\,\|y\|).
\end{multline*}
On the other hand,
\[d_h(x,y) = \cosh^{-1}([x][y]-\scalar{x}{y}) \leqsl
\cosh^{-1}([x][y]+\|x\|\,\|y\|)\]
and equality holds in the above iff $\scalar{x}{y} = -\|x\|\,\|y\|$, or,
equivalently, if either $x = ty$ or $y = tx$ for some $t \leqsl 0$. So,
to complete the whole proof, we only need to check that
\[[x][y]+[x]\,\|y\|+[y]\,\|x\|+\|x\|\,\|y\| =
\exp(\cosh^{-1}([x][y]+\|x\|\,\|y\|)),\]
which is left to the reader as an elementary exercise.
\end{proof}

To get the triangle inequality for general triples of elements of the hyperbolic
space, we will use certain one-dimensional perturbations of the identity map,
which turn out to be isometries (with respect to the hyperbolic distance). They
are introduced in the following

\begin{dfn}{translation}
For any $y \in H^n(\RRR)$ let a map $T_y\dd H^n(\RRR) \to H^n(\RRR)$ be defined
by \[T_y(x) = x + \Bigl([x]+\frac{\scalar{x}{y}}{[y]+1}\Bigr)y \qquad (x \in
H^n(\RRR)).\]
\end{dfn}

\begin{lem}{translation}
For any $y \in H^n(\RRR)$ the map $T_y$ is bijective and fulfills the equation:
\begin{equation}\label{eqn:trans-iso}
d_h(T_y(a),T_y(b)) = d_h(a,b) \qquad (a, b \in H^n(\RRR)).
\end{equation}
Moreover, $T_{-y}$ is the inverse of $T_y$.
\end{lem}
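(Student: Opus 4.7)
My plan is to establish first a single clean identity---that the ``bracket'' of $T_y(x)$ collapses to a simple expression---and then leverage it to reduce both the isometry property and the inverse relation to short algebraic checks. Throughout, I will abbreviate $a=[x]$, $b=[y]$, $s=\scalar{x}{y}$, so that $T_y(x)=x+\alpha y$ with $\alpha=a+\tfrac{s}{b+1}$, and aim first at the formula $[T_y(x)]=[x][y]+\scalar{x}{y}$. To prove it, I expand $\|T_y(x)\|^2=\|x\|^2+2\alpha s+\alpha^2\|y\|^2$, substitute $\|x\|^2=a^2-1$ and $\|y\|^2=b^2-1$, and collect terms; after the arithmetic simplification $\tfrac{2}{b+1}+\tfrac{b-1}{b+1}=1$ for the $s^2$-coefficient, the remaining monomials combine to $(ab+s)^2-1$. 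Hence $[T_y(x)]^2=(ab+s)^2$, and the positivity $ab+s\geqsl ab-\|x\|\,\|y\|\geqsl 1$ (the same Schwarz-type inequality underlying \LEM{well}) lets me drop the sign ambiguity.

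Given this identity, the isometry claim \eqref{eqn:trans-iso} reduces, by monotonicity of $\cosh^{-1}$, to the equality $[T_y(x_1)][T_y(x_2)]-\scalar{T_y(x_1)}{T_y(x_2)} = [x_1][x_2]-\scalar{x_1}{x_2}$. I compute $\scalar{T_y(x_1)}{T_y(x_2)}$ by expanding the inner product and organising by the weights $a_i$, $s_i$; the coefficients of $a_1 s_2+a_2 s_1$, $s_1 s_2$, and $a_1 a_2$ each simplify using the same $\tfrac{2}{b+1}+\tfrac{b-1}{b+1}=1$ trick. Then the product $[T_y(x_1)][T_y(x_2)]=(a_1 b+s_1)(a_2 b+s_2)$, supplied by the identity above, cancels all the $b$-dependent terms in $[T_y(x_1)][T_y(x_2)]-\scalar{T_y(x_1)}{T_y(x_2)}$ except for $a_1 a_2-\scalar{x_1}{x_2}$, exactly as required.

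For bijectivity and the identification $T_y^{-1}=T_{-y}$, it suffices to verify $T_{-y}(T_y(x))=x$; swapping $y\mapsto-y$ then yields the other composition. Since $T_{-y}(z)=z-\bigl([z]-\tfrac{\scalar{z}{y}}{[y]+1}\bigr)y$, applied to $z=T_y(x)$ this reduces to showing that $[T_y(x)]-\tfrac{\scalar{T_y(x)}{y}}{b+1}=\alpha$. Using $[T_y(x)]=ab+s$ and the direct computation $\scalar{T_y(x)}{y}=s+\alpha(b^2-1)=sb+a(b^2-1)$, a one-line simplification yields
\[ab+s-\frac{sb+a(b^2-1)}{b+1} = a+\frac{s}{b+1} = \alpha,\]
so $T_{-y}(T_y(x))=T_y(x)-\alpha y=x$. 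The main obstacle throughout is purely bookkeeping: no conceptual difficulty arises, but the expansions involve several competing terms. The guiding heuristic is that once the clean formula $[T_y(x)]=[x][y]+\scalar{x}{y}$ is in hand, every subsequent identity telescopes, reflecting the fact that $T_y$ is really the pull-back to $\RRR^n$ of a Lorentz boost on the hyperboloid model of $H^n(\RRR)$.
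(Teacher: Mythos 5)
Your proposal is correct and follows essentially the same route as the paper: first establish the key identity $[T_y(x)]=[x][y]+\scalar{x}{y}$, then reduce \eqref{eqn:trans-iso} to an algebraic identity in brackets and inner products, and finally verify $T_{-y}\circ T_y=\mathrm{id}$ (invoking the $y\mapsto -y$ symmetry for the other composition). The only cosmetic difference is that you check the composition by computing $\scalar{T_y(x)}{y}$ directly rather than via the paper's orthogonal decomposition $x=z+\beta y$, and you make explicit the sign argument $[x][y]+\scalar{x}{y}\geqsl 1$ when extracting the square root, a detail the paper leaves implicit.
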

\begin{proof}
We start from computing $[T_y(u)]$ for $u \in H^n(\RRR)$:
\begin{multline*}
[T_y(u)]^2 = 1+\left\|u+\left([u]+\frac{\scalar{u}{y}}{[y]+1}\right)y\right\|^2
\\= 1+\|u\|^2+2\left([u]+\frac{\scalar{u}{y}}{[y]+1}\right)\scalar{u}{y}
+ \left([u]+\frac{\scalar{u}{y}}{[y]+1}\right)^2\|y\|^2\\
= [u]^2+\left(2[u]\scalar{u}{y}+2\frac{\scalar{u}{y}^2}{[y]+1}\right)
+ [u]^2\|y\|^2+\left(2[u]\scalar{u}{y}+\frac{\scalar{u}{y}^2}{[y]+1}\right)
\frac{[y]^2-1}{[y]+1}\\= [u]^2[y]^2+2[u]\scalar{u}{y}[y]+\scalar{u}{y}^2
\end{multline*}
and thus
\begin{equation}\label{eqn:T}
[T_y(u)] = [u]\,[y]+\scalar{u}{y}.
\end{equation}
For simplicity, put $\alpha(x) = [x]+\frac{\scalar{x}{y}}{[y]+1}$ (then $T_y(x)
= x + \alpha(x)y$). Observe that \eqref{eqn:trans-iso} is equivalent to
\[[T_y(a)]\,[T_y(b)]-\scalar{T_y(a)}{T_y(b)} = [a]\,[b]-\scalar{a}{b},\]
which can easily be transformed to an equivalent form:
\[[T_y(a)]\,[T_y(b)] = [a]\,[b]+\alpha(b)\scalar{a}{y}+\alpha(a)\scalar{b}{y}
+\alpha(a)\alpha(b)\|y\|^2.\]
The right-hand side expression of the above equation can be transformed as
follows:
\begin{multline*}
[a]\,[b]+\alpha(b)\scalar{a}{y}+\alpha(a)\scalar{b}{y}+\alpha(a)\alpha(b)\|y\|^2
\\=[a][b]+[b]\scalar{a}{y}+\frac{\scalar{b}{y}}{[y]+1}\scalar{a}{y}
+[a]\scalar{b}{y}+\frac{\scalar{a}{y}}{[y]+1}\scalar{b}{y}
+[a][b]([y]^2-1)\\+([b]\scalar{a}{y}+[a]\scalar{b}{y})\frac{[y]^2-1}{[y]+1}
+\frac{\scalar{a}{y}\scalar{b}{y}}{\left([y]+1\right)^2}([y]^2-1)\\
=[a][b][y]^2+[b]\scalar{a}{y}[y]+[a]\scalar{b}{y}[y]+\scalar{a}{y}\scalar{b}{y}
\\= ([a][y]+\scalar{a}{y})([b][y]+\scalar{b}{y}),
\end{multline*}
which equals $[T_y(a)]\,[T_y(b)]$, by \eqref{eqn:T}. So, \eqref{eqn:trans-iso}
is proved and, combined with \LEM{well}, implies that $T_y$ is one-to-one. To
finish the whole proof, it suffices to check that $T_{-y} \circ T_y$ coincides
with the identity map on $H^n(\RRR)$ (because then, by symmetry, also $T_y \circ
T_{-y}$ will coincide with the identity map). To this end, we fix $x \in
H^n(\RRR)$, and may and do assume that $y \neq 0$. Then there exist a unique
vector $z$ and a unique real number $\beta$ such that $\scalar{z}{y} = 0$ and
$x = z + \beta y$. Note that $T_y(x) = z+([x]+\beta[y])y$ and, consequently,
\begin{multline*}
T_{-y}(T_y(x)) = z+([x]+\beta[y])y-\Bigl([T_y(x)]-([x]+\beta[y])([y]-1)\Bigr)y\\
= z-[T_y(x)]y+([x]+\beta[y])[y]y.
\end{multline*}
Finally, an application of \eqref{eqn:T} enables us continuing the above
calculations to obtain:
\[T_{-y}(T_y(x)) = z-([x]\,[y]+\scalar{x}{y})y+([x]\,[y]+\beta[y]^2)y
= z-\beta(\|y\|^2-[y]^2)y = x.\]
\end{proof}

We are now able to prove the main result of this section.

\begin{thm}{metric}
The function $d_h$ is a metric equivalent to $d_e$.
\end{thm}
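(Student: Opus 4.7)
The plan is to combine \LEM{tri_ineq} and \LEM{translation}: the former supplies the triangle inequality through the origin, while the latter provides hyperbolic-distance-preserving bijections $T_y$ that can be used to move any chosen point to $0$. First I would verify the simple identity $T_{-z}(z)=0$ by a direct computation, which amounts to checking that the scalar coefficient of $z$ in the definition of $T_{-z}(z)$ collapses via $1 - [z] + \|z\|^2/([z]+1) = (1-[z]^2+\|z\|^2)/([z]+1) = 0$. Given arbitrary $x,y,z \in H^n(\RRR)$, setting $x' := T_{-z}(x)$ and $y' := T_{-z}(y)$, \LEM{translation} then gives
\[d_h(x,y) = d_h(x',y'), \quad d_h(x,z) = d_h(x',0), \quad d_h(z,y) = d_h(0,y'),\]
so applying \LEM{tri_ineq} to $x'$ and $y'$ delivers the general triangle inequality. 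Together with \LEM{well}, this makes $d_h$ a metric.

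For equivalence with $d_e$, I would split the proof into two implications. The forward one is immediate: the map $(x,y) \mapsto [x][y] - \scalar{x}{y}$ is jointly continuous on $\RRR^n\times\RRR^n$ and $\cosh^{-1}$ is continuous on $[1,\infty)$, so $d_e$-convergence implies $d_h$-convergence.

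The reverse direction is the main obstacle, since composition of continuous functions runs the wrong way. My plan is: assuming $d_h(x_n,y)\to 0$, first show that $\{x_n\}$ is Euclidean-bounded via the estimate
\[\cosh(d_h(x_n,y)) = [x_n][y] - \scalar{x_n}{y} \geqsl \sqrt{(1+\|x_n\|^2)(1+\|y\|^2)} - \|x_n\|\,\|y\|,\]
noting that the right-hand side tends to $+\infty$ with $\|x_n\|$ because $\sqrt{1+\|y\|^2} > \|y\|$. Once $(x_n)$ is bounded in $\RRR^n$, any Euclidean cluster point $x^\ast$ satisfies $d_h(x^\ast,y)=0$ by the forward continuity just established, whence $x^\ast = y$ by \LEM{well}. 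Since every subsequence of $(x_n)$ has a further subsequence tending to $y$ in $d_e$, we conclude $x_n \to y$ Euclideanly, and the two topologies coincide.
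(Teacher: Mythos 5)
Your proof is correct. For the triangle inequality you follow essentially the paper's own route: conjugate by the isometry $T_{-z}$, which sends $z$ to $0$ (the paper deduces this from $T_y(0)=y$ together with $T_{-y}=T_y^{-1}$, while you verify it by the direct coefficient computation $1-[z]+\|z\|^2/([z]+1)=0$, which checks out), and then invoke \LEM{tri_ineq}. For the equivalence of the topologies the two arguments genuinely diverge. The paper exploits the maps $T_y$ once more: it reduces $x_n\to x$ to $T_{-x}(x_n)\to 0$, where $d_h(u,0)=\cosh^{-1}([u])$ makes both implications transparent, the only extra input being that $T_x$ and $T_{-x}$ are Euclidean homeomorphisms. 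You instead prove the nontrivial implication ($d_h$-convergence forces $d_e$-convergence) by hand: the lower bound $\cosh(d_h(x_n,y))\geqsl [x_n]\,[y]-\|x_n\|\,\|y\|\geqsl\|x_n\|\bigl(\sqrt{1+\|y\|^2}-\|y\|\bigr)$ shows the sequence is Euclidean-bounded, and the cluster-point argument combined with $d_h(x^\ast,y)=0\Rightarrow x^\ast=y$ (\LEM{well}) finishes. Both arguments are complete; the paper's is shorter once the $T_y$ machinery is available and needs no compactness, whereas yours is self-contained at this stage (it uses only the already-established joint Euclidean continuity of $d_h$ and the Schwarz inequality) and would work even without knowing that the $T_y$ are Euclidean homeomorphisms.
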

\begin{proof}
To show the triangle inequality, consider arbitrary three points $x$, $y$ and
$z$ of $H^n(\RRR)$. Since $T_y(0) = y$, it follows from \LEM[s]{translation}
and \ref{lem:tri_ineq} that
\[d_h(x,z) = d_h(T_{-y}(x),T_{-y}(z)) \leqsl d_h(T_{-y}(x),0)+d_h(0,T_{-y}(z))
= d_h(x,y)+d_h(y,z).\]
Further, to establish the equivalence of the metrics, observe that
\begin{equation}\label{eqn:dist0}
d_h(x,0) = \cosh^{-1}([x])
\end{equation}
and that both $T_y$ and $T_{-y}$ are continuous with respect to the Euclidean
metric (and thus they are homeomorphisms in this metric). Thus, for an arbitrary
sequence $(x_n)_{n=1}^{\infty}$ of elements of $\RRR^n$ and any $x \in \RRR^n$
we have (note that $T_{-x}(x) = 0$):
\begin{multline*}
x_n \stackrel{d_h}{\to} x \iff d_h(x_n,x) \to 0 \iff d_h(T_{-x}(x_n),0) \to 0
\iff [T_{-x}(x_n)] \to 1\\\iff T_{-x}(x_n) \stackrel{d_e}{\to} 0 \iff x_n
\stackrel{d_e}{\to} T_x(0) = x.
\end{multline*}
\end{proof}

The following is an immediate consequence of \THM{metric} (and the fact that
the collections of all closed balls around $0$ with respect to $d_h$ and $d_e$,
respectively, coincide---only radii change when switching between $d_h$ and
$d_e$). We skip its simple proof.

\begin{cor}{loc}
For each $n$, the hyperbolic space $H^n(\RRR)$ is locally compact and connected
and the metric $d_h$ is proper (that is, all closed balls are compact) and, in
particular, complete.
\end{cor}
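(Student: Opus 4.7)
The plan is to leverage two facts already in hand: the topological equivalence of $d_h$ and $d_e$ on $\RRR^n$ (\THM{metric}), and the existence of the translation-like isometries $T_y$ of \LEM{translation}. Equivalence of metrics immediately transfers the topological properties of $(\RRR^n,d_e)$ to $(H^n(\RRR),d_h)$, so local compactness and connectedness require no separate argument.

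For properness I would begin at the origin. Formula \eqref{eqn:dist0} gives $d_h(x,0)=\cosh^{-1}([x])$, and since $t\mapsto\cosh^{-1}(\sqrt{1+t^2})$ is a strictly increasing continuous bijection of $[0,\infty)$ onto itself, the $d_h$-closed ball around $0$ of radius $r$ is exactly the $d_e$-closed ball around $0$ of radius $\sqrt{\cosh^2(r)-1}=\sinh(r)$. In particular, closed $d_h$-balls centred at $0$ are $d_e$-compact, hence compact in the (equivalent) $d_h$-topology.

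To handle an arbitrary centre $y$, I would use that $T_y(0)=y$ together with \eqref{eqn:trans-iso}, which together imply
\[\{x\in H^n(\RRR):d_h(x,y)\leqsl r\}=T_y\bigl(\{x\in H^n(\RRR):d_h(x,0)\leqsl r\}\bigr).\]
Since $T_y$ is bijective and, by the proof of \THM{metric}, continuous with respect to $d_e$ (hence with respect to the equivalent metric $d_h$), the right-hand side is the continuous image of a compact set, so it is compact. Thus every closed $d_h$-ball is compact, i.e., $d_h$ is proper.

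Completeness is then automatic: a $d_h$-Cauchy sequence is bounded, hence contained in some compact closed ball, so it admits a convergent subsequence, and a Cauchy sequence with a convergent subsequence converges. I do not foresee a real obstacle here; the only thing to watch is to invoke the isometries $T_y$ rather than trying to compute $d_h(x,y)$ directly, so that the reduction to the origin is clean.
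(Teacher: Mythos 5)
Your proof is correct and follows exactly the route the paper indicates (the paper omits the proof but points to \THM{metric} together with the coincidence of closed $d_h$- and $d_e$-balls around $0$ up to a change of radius, which is precisely your $\sinh(r)$ computation). Your reduction of arbitrary centres to the origin via the isometries $T_y$ is sound; one could alternatively just note that a closed ball around $y$ is a closed subset of the compact ball around $0$ of radius $r+d_h(0,y)$, but this is a cosmetic difference.
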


\section{Absolute (metric) homogeneity}

\begin{dfn}{homogeneity}
A metric space $(X,d)$ is said to be \textit{absolutely} (\textit{metrically})
\textit{homogeneous} if any isometric map $f_0\dd (X_0,d) \to (X,d)$ defined on
a non-empty subset $X_0$ of $X$ extends to an isometry $f\dd (X,d) \to (X,d)$.
\end{dfn}

In this section we will show that hyperbolic spaces are absolutely homogeneous.
According to \THM{classification} (see Section~5 below), this property makes
them highly exceptional among all connected locally compact metric spaces.\par
The following is a reformulation of \LEM{translation}.

\begin{cor}{translation}
For any $y \in H^n(\RRR)$, the map $T_y\dd (H^n(\RRR),d_h) \to (H^n(\RRR),d_h)$
is an isometry.
\end{cor}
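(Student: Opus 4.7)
The plan is essentially to observe that this corollary is nothing more than a repackaging of \LEM{translation} in the vocabulary introduced in the notation section. Recall that in the paper's terminology, an \emph{isometry} means a surjective isometric map, i.e., a bijection $f\dd(X,d)\to(X,d)$ with $d(f(p),f(q))=d(p,q)$ for all $p,q$. So to verify the claim, I need to confirm that $T_y$ meets both halves of this definition.

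First I would invoke equation \eqref{eqn:trans-iso} of \LEM{translation}, which states precisely that $d_h(T_y(a),T_y(b))=d_h(a,b)$ for every $a,b\in H^n(\RRR)$; this gives the isometric (distance-preserving) part. Second, \LEM{translation} further asserts that $T_y$ is bijective, with $T_{-y}$ as its two-sided inverse; in particular $T_y$ is surjective. Combining these two facts with the definition of isometry completes the argument, so the write-up is genuinely a one-line appeal to the previous lemma.

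There is no real obstacle here—the difficulty was already absorbed in the verification of \eqref{eqn:T} and the subsequent algebraic manipulation establishing $T_{-y}\circ T_y=\operatorname{id}$ inside the proof of \LEM{translation}. The only thing to be careful about is terminology: one should make explicit that "isometry" in the sense of this paper requires surjectivity, which is why the bijectivity clause of \LEM{translation}—and not merely the distance-preservation clause—must be cited. Thus the entire proof amounts to the single sentence "Apply \LEM{translation}."
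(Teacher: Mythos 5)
Your proposal is correct and matches the paper exactly: the paper offers no separate proof, introducing the corollary with the sentence ``The following is a reformulation of \LEM{translation},'' which is precisely your one-line appeal to that lemma. Your care in noting that surjectivity (via the bijectivity clause and the inverse $T_{-y}$) is needed because the paper reserves ``isometry'' for surjective isometric maps is exactly the right point to make explicit.
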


\begin{lem}{partial}
For a map $u\dd A \to H^n(\RRR)$ where $A$ is a subset of $H^n(\RRR)$ containing
the zero vector \tfcae
\begin{enumerate}[\upshape(i)]
\item $u$ is isometric (with respect to $d_h$) and $u(0) = 0$;
\item $\scalar{u(x)}{u(y)} = \scalar{x}{y}$ for all $x, y \in A$.
\end{enumerate}
\end{lem}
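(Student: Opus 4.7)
The plan is to reduce both implications to a direct computation using the defining formula \eqref{eqn:metric} together with the identity $d_h(x,0)=\cosh^{-1}([x])$ from \eqref{eqn:dist0}. The bridge between the hyperbolic condition (i) and the purely algebraic condition (ii) is the observation that $d_h$ preserves inner products precisely when it preserves both norms and hyperbolic distances from $0$.

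For (i)$\Rightarrow$(ii): I will first test the isometry condition against the distinguished point $0 \in A$. Since $u(0)=0$, applying $d_h(u(x),u(0))=d_h(x,0)$ together with \eqref{eqn:dist0} and the injectivity of $\cosh^{-1}$ yields $[u(x)]=[x]$ for every $x\in A$, hence $\|u(x)\|=\|x\|$. Plugging this into the general isometry equation $d_h(u(x),u(y))=d_h(x,y)$ and cancelling the factor $[u(x)][u(y)]=[x][y]$ delivers (ii) at once.

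For (ii)$\Rightarrow$(i): Setting $x=y$ in (ii) gives $\|u(x)\|^2=\|x\|^2$, so $[u(x)]=[x]$; in particular $\|u(0)\|=0$, so $u(0)=0$. Then directly from the definition,
\[
d_h(u(x),u(y)) = \cosh^{-1}\!\bigl([u(x)][u(y)]-\scalar{u(x)}{u(y)}\bigr)
= \cosh^{-1}\!\bigl([x][y]-\scalar{x}{y}\bigr) = d_h(x,y),
\]
which is the isometric condition.

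There is no real obstacle here: both directions are short calculations, and no appeal to \LEM{translation} or to the triangle inequality is needed. The only point that must be handled carefully is the role of the assumption $0\in A$, which is used twice---to extract norm-preservation from (i) and to force $u(0)=0$ in (ii).
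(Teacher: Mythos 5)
Your proposal is correct and follows essentially the same route as the paper's own proof: extract $[u(x)]=[x]$ from the distance to $0$ (resp. from $\scalar{u(x)}{u(x)}=\scalar{x}{x}$), then cancel $[u(x)][u(y)]=[x][y]$ in the defining formula for $d_h$. Both directions and the use of the hypothesis $0\in A$ match the paper.
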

\begin{proof}
Assume (i) holds and fix $x, y \in A$. Then $[u(x)] = \cosh(d_h(u(x),u(0))) =
\cosh(d_h(x,0)) = [x]$. Similarly, $[u(y)] = [y]$ and thus $\scalar{u(x)}{u(y)}
= [u(x)]\,[u(y)]-\cosh(d_h(u(x),u(y))) = [x]\,[y]-\cosh(d_h(x,y)) =
\scalar{x}{y}$. Conversely, if (ii) holds and $x, y \in A$, then
$\scalar{u(x)}{u(x)} = \scalar{x}{x}$ and hence $[u(x)] = [x]$ (and, similarly,
$[u(y)] = [y]$) and $u(0) = 0$. But then easily $d_h(u(x),u(y)) = d_h(x,y)$ and
we are done. 
\end{proof}

\begin{thm}{homogeneous}
For any $n$ the hyperbolic space $(H^n(\RRR),d_h)$ is absolutely homogeneous.
\end{thm}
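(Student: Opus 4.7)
The plan is to reduce everything, via the translation isometries $T_y$, to the case where the map fixes the origin; then invoke \LEM{partial} to convert the problem into extending an inner-product-preserving map; and finally extend that by standard linear algebra to an orthogonal transformation, which will automatically be a $d_h$-isometry thanks to the form of \eqref{eqn:metric}.

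More concretely, given an isometric $f_0 \dd (X_0,d_h) \to (H^n(\RRR),d_h)$ with $\emptyset \neq X_0 \subseteq H^n(\RRR)$, I would fix some $x_0 \in X_0$, set $A = T_{-x_0}(X_0)$ (which contains $0$ since $T_{-x_0} = T_{x_0}^{-1}$ and $T_{x_0}(0) = x_0$), and put $g = T_{-f_0(x_0)} \circ f_0 \circ T_{x_0}|_A$. By \COR{translation} this $g \dd A \to H^n(\RRR)$ is $d_h$-isometric and satisfies $g(0) = 0$, so \LEM{partial} gives $\scalar{g(x)}{g(y)} = \scalar{x}{y}$ for all $x,y \in A$.

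Next I would extend $g$ linearly. Picking a maximal linearly independent family $a_1,\ldots,a_k \in A$ and letting $V = \operatorname{span}(a_1,\ldots,a_k)$, I would define $\tilde g \dd V \to \RRR^n$ by $\tilde g(\sum c_i a_i) = \sum c_i g(a_i)$; the matching Gram matrices force $\tilde g$ to preserve the inner product on $V$, and the standard one-line computation
\[
\|g(a) - \tilde g(a)\|^2 = \scalar{a}{a} - 2\scalar{a}{a} + \scalar{a}{a} = 0
\qquad (a \in A)
\]
shows $\tilde g|_A = g$. Choosing orthonormal bases of $V$ and of $\RRR^n$ extending one of $V$, and mapping the extra basis vectors to any orthonormal completion of $\tilde g$(ONB of $V$) inside $\RRR^n$, produces an orthogonal operator $U \dd \RRR^n \to \RRR^n$ with $U|_A = g$. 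Because $U$ preserves both $\|\cdot\|$ (hence $[\cdot]$) and $\scalar{\cdot}{\cdot}$, the formula \eqref{eqn:metric} immediately gives $d_h(Ux,Uy) = d_h(x,y)$, so $U \in \Iso(H^n(\RRR),d_h)$.

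The sought extension of $f_0$ is then $f = T_{f_0(x_0)} \circ U \circ T_{-x_0}$, which is a composition of $d_h$-isometries and, by unwinding the definition of $g$, restricts on $X_0$ to $f_0$. The only delicate point is the linear-algebraic extension step, but after \LEM{partial} it is entirely routine; the conceptual heart of the argument is really the identity \eqref{eqn:trans-iso} (giving translations) together with \LEM{partial} (reducing hyperbolic to Euclidean geometry at the origin).
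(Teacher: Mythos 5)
Your proposal is correct and follows essentially the same route as the paper: conjugate by the translations $T_{x_0}$ and $T_{-f_0(x_0)}$ to reduce to a map fixing $0$, apply \LEM{partial} to get preservation of inner products, and extend to an orthogonal $U$, which is a $d_h$-isometry. The only difference is that you spell out the linear-algebraic extension step that the paper dismisses as ``well known,'' and your details there are sound.
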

\begin{proof}
Fix an isometric map $v\dd A \to H^n(\RRR)$ where $A$ is a non-empty subset of
$H^n(\RRR)$. Take any $a \in A$, put $B = T_{-a}(A)$ and $u = T_{-v(a)} \circ
v \circ T_a\dd B \to H^n(\RRR)$, and observe that $0 \in B$, $u(0) = 0$ and $u$
is isometric (with respect to $d_h$). So, the map $u$ satisfies condition (ii)
from \LEM{partial}. It is well known (and easy to show) that each such a map
extends to a linear map $U\dd \RRR^n \to \RRR^n$ that corresponds (in
the canonical basis of $\RRR^n$) to an orthogonal matrix. The last property
means precisely that also $U$ fulfills the equation from condition (ii) of
\LEM{partial}. Hence, $U$ is isometric with respect to $d_h$. Then $T_{v(a)}
\circ U \circ T_{-a}$ is an isometry that extends $v$.
\end{proof}

The above proof enables to describe all isometries of the hyperbolic space
$H^n(\RRR)$: all of them are of the form $u = T_a \circ U$ where $a$ is a vector
and $U\dd \RRR^n \to \RRR^n$ is an orthogonal linear map (both $a$ and $U$ are
uniquely determined by $u$). In particular, the isotropy groups $\stab(x) =
\{u \in \Iso(H^n(\RRR),d_h)\dd\ u(x) = x\}$ of elements $x \in H^n(\RRR)$ are
pairwise isomorphic and $\stab(0)$ is precisely the group $O_n$ of all
orthogonal linear transformations of $\RRR^n$. The group $O_n$ also coincides
with the isotropy group of the zero vector with respect to the isometry group
of the $n$-dimensional Euclidean space. So, hyperbolic spaces are quite similar
to Euclidean. However, the last spaces have many (bijective) dilations, which
contrasts with hyperbolic geometry, as shown by

\begin{thm}{dilation}
For $n > 1 $, every dilation on $(H^n(\RRR),d_h)$ is an isometry.
\end{thm}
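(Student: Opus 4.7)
The plan is to reduce to the case $\phi(0) = 0$ and then test the dilation against a rigid four-point configuration in order to extract a functional equation on the dilation ratio that forces it to equal~$1$.

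Let $\phi$ be a dilation with ratio $k > 0$. By \COR{translation} the composition $\psi := T_{-\phi(0)} \circ \phi$ is still a dilation with ratio $k$ and satisfies $\psi(0) = 0$, so it suffices to treat that case. Using $n > 1$, I would pick orthogonal unit vectors $e_1, e_2 \in \RRR^n$ and, for a parameter $t > 0$, set $a := te_1$, $b := te_2$, $c := -te_1$. Writing $r := d_h(0,a)$ (so that $\cosh(r) = \sqrt{1+t^2}$), an immediate computation from \eqref{eqn:metric} and \eqref{eqn:dist0} gives
\[d_h(0,a) = d_h(0,b) = d_h(0,c) = r,\qquad d_h(a,c) = 2r,\qquad d_h(a,b) = d_h(b,c) = F(r),\]
where $F(r) := \cosh^{-1}\bigl(\cosh^2 r\bigr)$.

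The crux is the comparison of these four points with their $\psi$-images. Since $\psi(0) = 0$, one has $[\psi(a)] = [\psi(b)] = [\psi(c)] = \cosh(kr)$, so each of $\psi(a), \psi(b), \psi(c)$ has Euclidean norm $\sinh(kr)$. The relation $d_h(\psi(a),\psi(c)) = 2kr$, combined with the doubling formula $\cosh(2kr) = 2\cosh^2(kr) - 1$, forces $\scalar{\psi(a)}{\psi(c)} = -\sinh^2(kr)$, and the equality case of Cauchy--Schwarz then gives $\psi(c) = -\psi(a)$. Plugging this into the two instances of \eqref{eqn:metric} asserting $d_h(\psi(a),\psi(b)) = d_h(\psi(b),\psi(c)) = kF(r)$, and using $\scalar{\psi(b)}{\psi(c)} = -\scalar{\psi(b)}{\psi(a)}$, yields $\cosh(kF(r)) = \cosh^2(kr)$, i.e.\
\[F(kr) = k F(r) \qquad (r > 0).\]

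To finish, I would study $F$ asymptotically. A Taylor expansion at $0$ gives $F(r)/r \to \sqrt{2}$ as $r \to 0^+$, while $F(r) = 2r - \log 2 + o(1)$ as $r \to \infty$ yields $F(r)/r \to 2$. Iterating the functional equation gives $F(k^n r)/(k^n r) = F(r)/r$ for every $n \in \mathbb{Z}$; if $k \neq 1$, sending $n \to \pm\infty$ would force $F(r)/r$ to equal both $\sqrt{2}$ and $2$, a contradiction. Hence $k = 1$, so $\psi$ (and therefore $\phi$) is an isometry. The step I expect to be the main obstacle is the discovery of the four-point configuration $\{0,a,b,c\}$: the point $c$ must be the hyperbolic antipode of $a$ through $0$ so that the Cauchy--Schwarz equality case pins $\psi(c) = -\psi(a)$, while $b$ must be Euclidean-orthogonal to the $a$-axis (this is where $n > 1$ enters) and equidistant from $a$ and $c$, so that the two relations stemming from $\psi(b)$ collide and produce a non-trivial constraint on $k$. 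Once this rigidity is extracted, the rest is soft asymptotic analysis of $F$.
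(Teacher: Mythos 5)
Your proof is correct and follows essentially the same route as the paper's: after normalising so that the dilation fixes the origin, both arguments test it against orthogonal vectors equidistant from $0$ and arrive at the same functional equation --- your $F(kr)=kF(r)$ is exactly \eqref{eqn:c=1} under the substitution $t=\cosh r$ --- which is then ruled out for $k\neq 1$ by asymptotic analysis. The only cosmetic differences are that the paper first establishes $u(-x)=-u(x)$ and preservation of orthogonality for all points before specialising (where you extract antipodality of the single pair $\psi(a),\psi(c)$ via the Cauchy--Schwarz equality case), and that it kills the functional equation by comparing the growth of both sides as $t\to\infty$ rather than the two-sided limits of $F(r)/r$.
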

\begin{proof}
Assume $u\dd H^n(\RRR) \to H^n(\RRR)$ satisfies $d_h(u(x),u(y)) = c d_h(x,y)$
for all $x, y \in H^n(\RRR)$ and some constant $c > 0$. Our aim is to show that
then $c = 1$. (That is all we need to prove, since any global isometric map
on $H^n(\RRR)$ is onto---its inverse map is extendable to an isometry which
means that actually it is an isometry.)\par
Replacing $u$ by $T_{-u(0)} \circ u$, we may and do assume that $u(0) = 0$. Then
it follows from \eqref{eqn:dist0} that for any $x, y \in H^n(\RRR)$:
\begin{equation}\label{eqn:preserve}
\|f(x)\| = \|f(y)\| \iff \|x\| = \|y\|.
\end{equation}
Further, $d_h(x,0) = d_h(-x,0)$ and $d_h(x,-x) = d_h(x,0)+d_h(0,-x)$.
Consequently, $d_h(u(x),0) = d_h(u(-x),0)$ and $d_h(u(x),u(-x)) = d_h(u(x),0)+
d_h(0,u(-x))$. So, we infer from \LEM{tri_ineq} (and from \eqref{eqn:preserve})
that $u(x) = -u(x)$ for all $x \in H^n(\RRR)$.\par
Fix arbitrary two vectors $x, y \in H^n(\RRR)$ such that $\scalar{x}{y} = 0$.
Then \[\cosh(d_h(x,y)) = [x]\,[y] = [x]\,[-y] = \cosh(d_h(x,-y)),\] thus
$d_h(u(x),u(y)) = d_h(u(x),u(-y)) = d_h(u(x),-u(y))$. This implies that
\[\scalar{u(x)}{u(y)} = 0.\]\par
Now for arbitrary $t \geqsl 1$ choose two vectors $x, y\in H^n(\RRR)$ such that
$[x] = [y] = t$ and $\scalar{x}{y} = 0$. Then also $\scalar{u(x)}{u(y)} = 0$ and
(by \eqref{eqn:preserve}) $[u(y)] = [u(x)] = \cosh(d_h(u(x),0)) =
\cosh(c d_h(x,0)) = \cosh(c \cosh^{-1}(t))$. It follows from the former property
that $c \cosh^{-1}([x]\,[y]) = c d_h(x,y) = d_h(u(x),u(y)) = \cosh^{-1}([u(x)]\,
[u(y)])$, which combined with the latter yields
\begin{equation}\label{eqn:c=1}
\cosh(c \cosh^{-1}(t^2)) = (\cosh(c \cosh^{-1}(t)))^2.
\end{equation}
The above equation is valid for every $t \geqsl 1$ only if $c = 1$. Although
this is a well-known fact, for the reader's convenience we give its brief proof.
Observe that \[\cosh(c \cosh^{-1}(t^2)) =
\frac12\Bigl(\bigl(t^2+\sqrt{t^4-1}\bigr)^c+
\bigl(t^2+\sqrt{t^4-1}\bigr)^{-c}\Bigr)\] and \[(\cosh(c \cosh^{-1}(t)))^2 =
\frac14\Bigl(\bigl(t+\sqrt{t^2-1}\bigr)^c+
\bigl(t+\sqrt{t^2-1}\bigr)^{-c}\Bigr)^2.\] As a consequence,
$\lim_{t\to\infty} \frac{\cosh(c \cosh^{-1}(t^2))}{t^{2c}} = 2^c$, whereas
\[\lim_{t\to\infty} \frac{(\cosh(c \cosh^{-1}(t)))^2}{t^{2c}} = 2^{2c-1}.\] So,
if \eqref{eqn:c=1} holds for all $t \geqsl 1$, then $2^c = 2^{2c-1}$ and $c =
1$.
\end{proof}

We underline that the claim of \THM{dilation} is false for $n = 1$ (see
\COR{n=1} below).\par
As an immediate consequence of the above result we obtain

\begin{cor}{non-iso}
For any $n > 1$, the metric spaces $(H^n(\RRR),d_h)$ and $(\RRR^n,d_e)$ are
non-isometric.
\end{cor}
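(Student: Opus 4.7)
The plan is to deduce the corollary from \THM{dilation} by a short transport-of-structure argument: the Euclidean space carries many nontrivial dilations (the scalar multiplications $v \mapsto cv$), so if $H^n(\RRR)$ were isometric to $\RRR^n$ it would inherit a nontrivial dilation, contradicting \THM{dilation}.

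More concretely, I would argue by contradiction. Suppose $n > 1$ and that there exists an isometry $\varphi\dd (H^n(\RRR), d_h) \to (\RRR^n, d_e)$; by the paper's convention this means $\varphi$ is a surjective distance-preserving map, hence bijective with distance-preserving inverse. Fix any constant $c > 0$ with $c \neq 1$ and let $\mu_c\dd \RRR^n \to \RRR^n$ be the scaling $\mu_c(v) = cv$, which clearly satisfies $d_e(\mu_c(v), \mu_c(w)) = c\, d_e(v, w)$.

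Now consider the composition $u = \varphi^{-1} \circ \mu_c \circ \varphi\dd H^n(\RRR) \to H^n(\RRR)$. For any $x, y \in H^n(\RRR)$,
\[
d_h(u(x), u(y)) = d_e(\mu_c(\varphi(x)), \mu_c(\varphi(y))) = c\, d_e(\varphi(x), \varphi(y)) = c\, d_h(x,y),
\]
so $u$ is a dilation on $(H^n(\RRR), d_h)$ with constant $c \neq 1$. This contradicts \THM{dilation}, which asserts that every dilation on $(H^n(\RRR), d_h)$ for $n > 1$ is an isometry (i.e., has constant $1$). Therefore no such $\varphi$ exists and the two spaces are non-isometric.

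There is no real obstacle here: once \THM{dilation} is available, the proof is a one-line transport argument. The only mild point worth noting is that we only need one direction of $\varphi$ to conjugate $\mu_c$, and the paper's built-in surjectivity of isometries makes the conjugation automatic.
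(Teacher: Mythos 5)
Your argument is correct and is precisely the intended derivation: the paper presents \COR{non-iso} as an immediate consequence of \THM{dilation}, and conjugating a Euclidean scaling $v \mapsto cv$ by a hypothetical isometry to produce a non-isometric dilation on $(H^n(\RRR),d_h)$ is exactly the transport argument left implicit there. No gaps; the proposal matches the paper's approach.
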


\section{Hyperbolic geometry}

In this section we show that in the hyperbolic space $H^2(\RRR)$ all
the Euclid's postulates are fulfilled, apart from the fifth which is false.
These properties made hyperbolic geometry iconic.\par
Although the fifth postulate of Euclidean geometry matters only in the 2nd
dimension, our considerations will take place in all hyperbolic spaces.\par
The first two Euclid's postulates deal with straight lines and their segments.
Recall that a \textit{straight line} in a metric space is an isometric image of
the real line, and a \textit{straight line segment} is an isometric image of
the compact interval in $\RRR$. Additionally, for simplicity, we call three
points $a,b,c$ in a metric space $(X,d)$ \textit{metrically collinear} if there
are $x,y,z$ such that $d(x,z) = d(x,y)+d(y,z)$ and the sets $\{x,y,z\}$ and
$\{a,b,c\}$ coincide.\par
To avoid confusions, straight lines in $(\RRR^n,d_e)$ (that is, one-dimensional
affine subspaces) will be called \textsl{Euclidean} lines, whereas straight
lines in $(H^n(\RRR),d_h)$ will be called \textsl{hyperbolic} lines. We will
use analogous naming for other geometric notions---e.g., we will speak about
Euclidean and hyperbolic spheres (as sets of all points equidistinct from
a given point in Euclidean, resp. hyperbolic, metric).\par
The first Euclid's postulate says that any two points of the space can be joint
by a straight line segment. In the modern terminology, it is equivalent for
the metric to be geodesic (that is, convex). The second postulate is about
extending straight line segments to line segments. Both these axioms in
the hyperbolic spaces are fulfilled in a very strict form, as shown by

\begin{thm}{line}
Let $a$ and $b$ be two distinct points of $H^n(\RRR)$.
\begin{enumerate}[\upshape(A)]
\item The metric segment $I(a,b) = \{x \in H^n(\RRR)\dd\ d_h(a,b) =
 d_h(a,x)+d_h(x,b)\}$ is a unique straight line segment in $H^n(\RRR)$ that
 joins $a$ and $b$.
\item The set $L(a,b)$ of all $x \in H^n(\RRR)$ such that $x,a,b$ are metrically
 collinear is a unique hyperbolic line passing through $a$ and $b$.
\item Every isometric map $\gamma\dd \RRR \to H^n(\RRR)$ that sends $0$ to $a$
 is of the form $\gamma(t) = T_a(\sinh(t) z)$ where $z \in H^n(\RRR)$ is such
 that $\|z\| = 1$.
\end{enumerate}
\end{thm}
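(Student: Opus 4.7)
The plan is to prove the three parts in the order (C), (A), (B), reducing in each case to the situation $a = 0$ by conjugating with the isometry $T_{-a}$ of \COR{translation} and its inverse $T_a$. The main obstacle, from which everything else follows, is the computation in (C) of the inner products $\scalar{\tilde\gamma(s)}{\tilde\gamma(t)}$; after that, the equality case of the Schwarz inequality is the single tool that yields both the classification of isometric embeddings and the uniqueness assertions in (A) and (B).

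For (C), put $\tilde\gamma = T_{-a} \circ \gamma\dd \RRR \to H^n(\RRR)$; this map is isometric and fixes the origin. From \eqref{eqn:dist0} one immediately reads $[\tilde\gamma(t)] = \cosh|t|$, so $\|\tilde\gamma(t)\| = |\sinh t|$. For any $s, t \in \RRR$ the distance formula \eqref{eqn:metric} then gives
\[\scalar{\tilde\gamma(s)}{\tilde\gamma(t)} = [\tilde\gamma(s)][\tilde\gamma(t)] - \cosh(s-t) = \cosh s \cosh t - \cosh(s-t) = \sinh s \sinh t.\]
Fixing any $t_0 > 0$, the vector $z := \tilde\gamma(t_0)/\sinh t_0$ has norm $1$, and for every $t \in \RRR$ one has $\scalar{\tilde\gamma(t)}{z} = \sinh t$; combined with $\|\tilde\gamma(t)\| = |\sinh t|$, this is equality in the Schwarz inequality and forces $\tilde\gamma(t) = \sinh(t)\,z$. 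Applying $T_a$ yields (C).

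For (A), any straight line segment joining $a$ and $b$ is the image of some isometric map $\sigma\dd [0, L] \to H^n(\RRR)$ with $\sigma(0) = a$, $\sigma(L) = b$ and $L = d_h(a, b)$. The computation of (C) applies verbatim on $[0, L]$ to show that $T_{-a} \circ \sigma$ must take the form $t \mapsto \sinh(t)\,z$ for some unit vector $z$; the identity $\sigma(L) = b$ together with $\|T_{-a}(b)\| = \sinh L$ (a consequence of $[T_{-a}(b)] = \cosh L$) then forces $z = T_{-a}(b)/\|T_{-a}(b)\|$. Hence the image of $\sigma$ is uniquely determined, which proves the uniqueness part of (A). That this image coincides with $I(a,b)$: the inclusion $\subseteq$ is a direct triangle-equality check on the parameterization; for $\supseteq$, reducing to $a = 0$ and setting $t = d_h(0,x)$ for $x$ in the segment, expanding $\cosh d_h(x, T_{-a}(b)) = \cosh(L - t)$ gives $\scalar{x}{z} = \sinh t = \|x\|$, whence the Schwarz equality case yields $x = \sinh(t)\,z$.

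For (B), a hyperbolic line through $a$ and $b$ is the image of some isometric $\gamma\dd \RRR \to H^n(\RRR)$ whose image contains both $a$ and $b$; after replacing $\gamma$ by $t \mapsto \gamma(t + t_0)$ for a suitable $t_0$ we may assume $\gamma(0) = a$, whereupon (C) forces $\gamma(t) = T_a(\sinh(t)\,z)$ for a unit vector $z$. The requirement that $b$ lie in the image, together with the bijectivity of $\sinh\dd\RRR \to \RRR$, determines $z$ up to sign (and hence the set $T_a(\RRR z)$) uniquely. Writing $\ell$ for this common image, the inclusion $\ell \subseteq L(a,b)$ is immediate from the isometric parameterization. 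Conversely, for $x \in L(a,b)$ we split into three cases according to which of $x, a, b$ is metrically between the other two. The case $x \in I(a,b)$ is covered by (A). The case $a \in I(x,b)$ reduces (via $T_{-a}$) to equality in the triangle inequality through the origin, which by \LEM{tri_ineq} forces $T_{-a}(x)$ to be a non-positive scalar multiple of $T_{-a}(b)$, hence $x \in \ell$. The case $b \in I(a,x)$ follows by applying (A) to $I(a,x)$: on its canonical parameterization $T_a(\sinh(\cdot)\,w)$, the point $b \neq a$ forces $w = T_{-a}(b)/\|T_{-a}(b)\| = z$, and so $x \in T_a(\RRR w) = \ell$.
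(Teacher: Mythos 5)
Your argument is correct, but it takes a genuinely different route from the paper's. The paper first checks by direct computation that each $\gamma_z\dd t\mapsto\sinh(t)z$ is isometric, then proves that any triple realizing equality in the triangle inequality with one of its points at the origin must lie on a Euclidean line through $0$ --- handling the case where the origin is not the middle point by invoking the absolute homogeneity of \THM{homogeneous} to move the middle point to $0$ --- and from this deduces (B), then (A), and finally (C) via an isometric self-map of $\RRR$. You invert the order: you prove (C) first, by recovering the Gram matrix $\scalar{\tilde\gamma(s)}{\tilde\gamma(t)}=\sinh s\sinh t$ directly from \eqref{eqn:metric} and \eqref{eqn:dist0} and concluding via the equality case of the Schwarz inequality, and then derive (A) and (B) from (C), \LEM{tri_ineq} and further Schwarz-equality arguments. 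Your route buys independence from \THM{homogeneous} (only the translations of \COR{translation} are needed), and your three-case analysis in (B) is exhaustive and correct; the paper's route buys a quicker passage from the collinearity lemma to (A) and (B).

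One omission you should repair: everything you prove is of the ``rigidity'' type --- every isometric $\sigma$ or $\gamma$ \emph{must} have the form $T_a(\sinh(\cdot)z)$ --- while the existence of any straight line segment joining $a$ and $b$, and of any hyperbolic line through them (hence the assertions that $I(a,b)$ and $L(a,b)$ \emph{are} a segment and a line, not merely that any such object coincides with them), requires the converse statement that $t\mapsto\sinh(t)z$ is isometric for $\|z\|=1$. This is true and is a one-line reversal of your own computation, namely $\cosh(d_h(\sinh(s)z,\sinh(t)z))=\cosh s\cosh t-\sinh s\sinh t=\cosh(s-t)$, and it is precisely the computation the paper opens its proof with; without stating it, your (A) and (B) establish uniqueness only conditionally on existence.
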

\begin{proof}
First assume $a = 0$. All the assertions of the theorem in this case will be
shown in a few steps.\par
For any $z \in H^n(\RRR)$ with $\|z\| = 1$ denote by $\gamma_z\dd \RRR \to
H^n(\RRR)$ a map given by $\gamma_z(t) = \sinh(t)z$. This map is isometric,
which can be shown by a straightforward calculation:
\begin{multline*}
d_h(\gamma(s),\gamma(t)) = \cosh^{-1}([\sinh(s)z]\,[\sinh(t)z]-\sinh(s)\sinh(t))
\\=\cosh^{-1}\bigl(\sqrt{(\sinh(s)^2+1)(\sinh(t)^2+1)}-\sinh(s)\sinh(t)\bigr)
\\= \cosh^{-1}(\cosh(s)\cosh(t)-\sinh(s)\sinh(t))=\cosh^{-1}(\cosh(s-t))=|s-t|.
\end{multline*}
Observe that the image of $\gamma_z$ coincides with the linear span of
the vector $z$. Thus, every Euclidean line passing through the zero vector is
also a hyperbolic line.\par
Now let $x,y,z \in H^n(\RRR)$ satisfy $d_h(x,z) = d_h(x,y)+d_h(y,z)$ and let $0
\in \{x,y,z\}$. We claim that $x,y,z$ lie on a Euclidean line. Indeed, if
$y = 0$, it suffices to apply \LEM{tri_ineq}; and otherwise we may assume,
without loss of generality, that $x = 0$. In that case we proceed as follows.
The linear span $L$ of $y$ is a hyperbolic line (by the previous paragraph). So,
there exists an isometry $q\dd L \to L$ that sends $y$ to $0$. By absolute
homogeneity established in \THM{homogeneous}, there exists an isometry $Q\dd
H^n(\RRR) \to H^n(\RRR)$ that extends $q$. So, $Q(L) = L$, $Q(y) = 0$ and
$d_h(Q(x),Q(z)) = d_h(Q(x),Q(y))+d_h(Q(y),Q(z))$. Again, \LEM{tri_ineq} implies
that $Q(z) = t Q(x)$ for some $t \leqsl 0$ (as $Q(x) \neq 0 = Q(y)$). But $Q(x)
= Q(0) \in Q(L) = L$ and therefore also $Q(z) \in L$. So, $x, y, z \in L$ and we
are done.\par
Now we prove items (A)--(C) in the case $a = 0$. Let $z \in H^n(\RRR)$ be
a vector such that $\|z\| = 1$ and $b \in \gamma_z(\RRR)$. It follows from
the last paragraph that each element $x \in H^n(\RRR)$ such that $x,0,b$ are
metrically collinear belongs to $\gamma_z(\RRR)$. We also know that
$\gamma_z(\RRR)$ is a hyperbolic line. This shows that $L(0,b) =
\gamma_z(\RRR)$ and proves (B), from which (A) easily follows. Finally, if
$\gamma\dd \RRR \to H^n(\RRR)$ is an isometric map such that $\gamma(0) = 0$,
then for any $t \in \RRR$, the points $0$, $\gamma(1)$ and $\gamma(t)$ are
metrically collinear, so $\gamma(t) \in L(0,\gamma(1))$. It follows from
the above argument that $L(0,\gamma(1)) = \gamma_z(\RRR)$ for some unit vector
$z \in H^n(\RRR)$. Then $v = \gamma_z^{-1} \circ \gamma\dd \RRR \to \RRR$ is an
isometric map sending $0$ to $0$. Thus $v(t) = t$ or $v(t) = -t$. In the former
case we get $\gamma = \gamma_z$, whereas in the latter we have $\gamma =
\gamma_{-z}$, which finishes the proof of (C).\par
Now we consider a general case. When $a$ is arbitrary, $b \neq a$ and $\gamma\dd
\RRR \to H^n(\RRR)$ is isometric and sends $0$ to $a$, it is easy to verify that
$T_{-a}(I(a,b)) = I(0,T_{-a}(b))$, $T_{-a}(L(a,b)) = L(0,T_{-a}(b))$ and $T_{-a}
\circ \gamma$ is an isometric map from $\RRR$ into $H^n(\RRR)$ that sends $0$ to
$0$. So, the first part of the proof implies that $I(a,b)$ and $L(a,b)$ are
a unique straight line segment joining $a$ and $b$,
and---respectively---a unique hyperbolic line passing through $a$ and $b$.
Similarly, $T_{-a} \circ \gamma = \gamma_z$ for some unit vector $z$. Then
$\gamma = T_a \circ \gamma_z$ and we are done.
\end{proof}

It follows from the above result that two hyperbolic lines either are disjoint
or have a single common point, or coincide.\par
As a consequence of \THM{line}, we obtain the following result, which is at
least surprising when one compares the formulas for $d_h$ and $d_e$.

\begin{cor}{n=1}
The map $(\RRR,d_e) \ni t \mapsto \sinh(t) \in (H^1(\RRR),d_h)$ is an isometry.
\end{cor}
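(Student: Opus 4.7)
The plan is to observe that the statement is essentially an immediate specialisation of part (C) of \THM{line} (indeed, of the very first computation inside its proof) combined with the elementary fact that $\sinh$ is a bijection of $\RRR$ onto itself.

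More concretely, I would first instantiate the construction $\gamma_z(t) = \sinh(t)z$ from the proof of \THM{line} with $n=1$ and $z=1$. The calculation carried out there---namely
\[
d_h(\sinh(s),\sinh(t)) = \cosh^{-1}\bigl(\cosh(s)\cosh(t)-\sinh(s)\sinh(t)\bigr) = \cosh^{-1}(\cosh(s-t)) = |s-t|,
\]
valid for all $s,t\in\RRR$---shows directly that $\gamma_1\dd (\RRR,d_e)\to(H^1(\RRR),d_h)$, $\gamma_1(t)=\sinh(t)$, is an isometric map. (Alternatively, one invokes \THM{line}(C) with $a=0$, $z=1$.)

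It then remains only to verify surjectivity. Since $H^1(\RRR)=\RRR$ as a set and $\sinh\dd\RRR\to\RRR$ is a bijection (being strictly increasing with range $\RRR$), the map $\gamma_1$ is onto and hence an isometry. There is no serious obstacle here; the content of the corollary is really just the observation that the isometric parametrisation of the unique hyperbolic line through $0$ in direction $z=1$, guaranteed by \THM{line}, happens to exhaust the whole one-dimensional hyperbolic space.
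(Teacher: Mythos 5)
Your proposal is correct and follows exactly the route the paper intends: the corollary is stated there as an immediate consequence of Theorem 4.1 (the map $\gamma_z(t)=\sinh(t)z$ with $n=1$, $z=1$ is isometric by the computation in that proof, and surjectivity follows since $\sinh$ is a bijection of $\RRR$ onto $\RRR=H^1(\RRR)$). Nothing is missing.
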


The above result implies, in particular, that $H^1(\RRR)$ admits many
non-isometric dilations. Its assertion is the reason for considering (in almost
whole existing literature) only hyperbolic spaces of dimension greater than
one.\par
Another immediate consequence of \THM{line} reads as follows.

\begin{cor}{geo}
Hyperbolic spaces are geodesic.
\end{cor}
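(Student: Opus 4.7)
The plan is to invoke Theorem~\ref{thm:line}(C) and simply reparametrize the resulting isometric copy of $\RRR$ so that the segment joining $a$ to $b$ is traversed over the parameter interval $[0,1]$.

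Given distinct $a,b \in H^n(\RRR)$, set $t_0 := d_h(a,b) > 0$. I first want to produce a unit vector $z \in H^n(\RRR)$ so that the isometric map
\[
\widetilde{\gamma}\dd \RRR \to H^n(\RRR), \qquad \widetilde{\gamma}(t) = T_a(\sinh(t)z),
\]
furnished by \THM{line}(C), satisfies $\widetilde{\gamma}(0) = a$ and $\widetilde{\gamma}(t_0) = b$. To find $z$, let $w := T_{-a}(b)$. By \COR{translation}, $d_h(0,w) = d_h(a,b) = t_0$, and \eqref{eqn:dist0} then gives $[w] = \cosh(t_0)$, whence $\|w\|^2 = \cosh^2(t_0) - 1 = \sinh^2(t_0)$, so $\|w\| = \sinh(t_0) > 0$. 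Setting $z := w/\|w\|$ yields a unit vector with $\sinh(t_0)z = w$, and \LEM{translation} gives $T_a(w) = T_a(T_{-a}(b)) = b$; hence $\widetilde{\gamma}(t_0) = b$ and $\widetilde{\gamma}(0) = T_a(0) = a$.

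Now define $\gamma\dd [0,1] \to H^n(\RRR)$ by $\gamma(s) := \widetilde{\gamma}(s t_0)$. Since $\widetilde{\gamma}$ is isometric, for any $s_1,s_2 \in [0,1]$,
\[
d_h(\gamma(s_1),\gamma(s_2)) = |s_1 t_0 - s_2 t_0| = t_0\, |s_1-s_2| = d_h(a,b) \cdot d_e(s_1,s_2),
\]
so $\gamma$ is a dilation (with constant $d_h(a,b) > 0$), and obviously $\gamma(0) = a$, $\gamma(1) = b$. This matches the definition of geodesic given in the Introduction.

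There is no real obstacle: all the genuine work—existence of isometric parametrizations of hyperbolic lines through a prescribed point—is already packaged in \THM{line}, and the only additional input is the trivial identity $\cosh^2(t_0)-1 = \sinh^2(t_0)$, which guarantees that the appropriate unit vector $z$ exists. The only point deserving any attention is the rescaling from $\RRR$ (or $[0,t_0]$) to $[0,1]$, needed so that $\gamma$ is a dilation in the paper's sense rather than an isometric embedding.
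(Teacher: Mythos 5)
Your proof is correct and follows essentially the route the paper intends: \COR{geo} is presented there as an immediate consequence of \THM{line}, and your explicit construction of the unit vector $z$ together with the rescaling $\gamma(s)=\widetilde{\gamma}(st_0)$ merely fills in the omitted details. The only cosmetic point is that \THM{line}(C) is phrased as a classification of isometric maps rather than an existence statement, so the fact that $t\mapsto T_a(\sinh(t)z)$ is isometric should be attributed to the computation for $\gamma_z$ inside the proof of \THM{line} combined with \COR{translation}, which is exactly what your argument uses anyway.
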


We return to Euclid's postulates. The third of them is about the existence of
circles, which in the metric approach reduces to the statement that for any
center $a$ and a radius $r > 0$ the set, called a \textsl{sphere}, of points
whose distance from $a$ equals $r$ is non-empty (one can require more---e.g.,
that each sphere disconnects the space). Hyperbolic spaces satisfy the third
Euclid's postulate in a very strict way: any hyperbolic sphere around $0$ of
radius $r > 0$ coincides with the Euclidean sphere around $0$ of radius
$\sinh(r)$ (which easily follows from the formulas for $d_h$ and $d_e$);
and---by the absolute homogeneity of $H^n(\RRR)$---any other sphere is the image
of a sphere around $0$ by a global (hyperbolic) isometry. So, all of them
disconnect $H^n(\RRR)$, are pairwise homeomorphic, have homeomorphic complements
etc.\par
The fourth axiom of Euclidean geometry says that all right angles are congruent
(i.e., any of them is the image of any other by a global isometry). It deals
with angles and as such is most difficult (among all Euclid's postulates) to be
adapted in the realm of metric spaces. Knowing that hyperbolic spaces satisfy
first two Euclid's postulates, the following seems to be most intuitive
definition of angles and related notions (cf. \cite{bi2}).

\begin{dfn}{angle}
A (\textit{hyperbolic}) \textit{angle} is an ordered triple $(R_1,a,R_2)$ where
$R_1$ and $R_2$ are closed hyperbolic half-lines in $H^n(\RRR)$ issuing from
$a$. The point $a$ is called the \textit{vertex} of the angle $(R_1,a,R_2)$.\par
Two angles $(R_1,a,R_2)$ and $(M_1,b,M_2)$ are \textit{congruent} if there
exists an isometry $u\dd R_1 \cup R_2 \to M_1 \cup M_2$ such that $u(R_1) =
M_1$, $u(R_2) = M_2$ and $u(a) = b$.\par
Whenever $(R_1,a,R_2)$ is a hyperbolic angle, the half-lines $R_1$ and $R_2$ can
uniquely be enlarged to hyperbolic lines $L_1$ and $L_2$, respectively. Denote
by $R_1'$ and $R_2'$ the half-lines $(L_1 \setminus R_1) \cup \{a\}$ and $(L_2
\setminus R_2) \cup \{a\}$, respectively. We call the angle $(R_1,a,R_2)$
\textit{right} if the angles $(R_1,a,R_2)$, $(R_2',a,R_1)$, $(R_2,a,R_1')$ and
$(R_1',a,R_2')$ are pairwise congruent.
\end{dfn}

Fourth Euclid's postulate in hyperbolic spaces reads as follows.

\begin{pro}{4}
Let $n > 1$.
\begin{enumerate}[\upshape(a)]
\item Among angles with vertex at $0$, hyperbolic right angles coincide with
 Euclidean right angles.
\item For any two hyperbolic right angles $(R_1,a,R_2)$ and $(M_1,b,M_2)$ in
 $H^n(\RRR)$ there exists a global isometry $u\dd H^n(\RRR) \to H^n(\RRR)$ such
 that $u(R_1) = M_1$, $u(R_2) = M_2$ and $u(a) = b$.
\end{enumerate}
\end{pro}
\begin{proof}
First of all, note that in both hyperbolic and Euclidean spaces, congruency of
two angles can be witnessed by a global isometry (since all these spaces are
absolutely homogeneous). Having this in mind, both the claims follow from
the following three facts (first two of which have already been established,
whereas the last is classical):
\begin{itemize}
\item among straight lines passing through $0$, hyperbolic lines coincide with
 Euclidean lines;
\item among maps that leave $0$ fixed, hyperbolic isometries coincide with
 Euclidean isometries;
\item Euclidean right angles are congruent (with respect to $d_e$).
\end{itemize}
\end{proof}

Finally, we arrived at the fifth Euclid's postulate. Its original statement is
about two lines that intersect a third in a way that the sum of the inner angles
on one side is less than two right angles. As it is about measuring angles, more
convenient for us will be one of its equivalent statements, known as Playfair's
axiom, which says that \textsl{through any point not lying on a given straight
line there passes a unique straight line disjoint from this given one}. As we
announced, this axioms is false in hyperbolic geometry. Below we present
a detailed proof of this fact in its full generality. However, only the case
$n = 2$ is interesting in this matter.

\begin{thm}{5}
Let $n > 1$, $L$ be a hyperbolic line in $H^n(\RRR)$ and $a \notin L$. There are
infinitely many hyperbolic lines that pass through $a$ and are disjoint from
$L$.
\end{thm}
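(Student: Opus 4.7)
The plan is to normalize the configuration using absolute homogeneity and then, via the explicit description of hyperbolic lines from \THM{line}(C), count directions at $a$ whose associated line misses $L$.

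\emph{Normalization.} Combining \THM{homogeneous} with \THM{line}(C)---which identifies hyperbolic lines through $0$ with Euclidean lines through $0$---I may compose orthogonal transformations (all of which lie in $\stab(0)$) with translations $T_y$ to bring $L$ onto the first coordinate axis $\RRR e_1$. Next, an orthogonal transformation that fixes $e_1$ (and hence $L$) places $a$ into the plane spanned by $e_1,e_2$ with second coordinate $h > 0$. Finally, a translation $T_{se_1}$ preserves $L$ set-wise and, for the explicit choice $s = -a_1/\sqrt{1+h^2}$, annihilates the first coordinate of $a$ (this is a short manipulation using \LEM{translation} together with the collapsing identity $1 + h^2/(\sqrt{1+h^2}+1) = \sqrt{1+h^2}$). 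Thus I may assume $L = \RRR e_1$ and $a = h e_2$ with $h > 0$.

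\emph{Counting lines.} By \THM{line}(C), every hyperbolic line through $a$ is the image of some $\gamma_z(t) = T_a(\sinh(t) z)$ with $\|z\|=1$. Applying the formula for $T_{he_2}$ and again invoking the same identity, the coordinates of $\gamma_z(t)$ simplify to: first coordinate $\sinh(t)z_1$, $i$-th coordinate $\sinh(t)z_i$ for $i \geqsl 3$, and second coordinate $\sinh(t) z_2 \sqrt{1+h^2} + h\cosh(t)$. A point of $\gamma_z(\RRR)$ lies in $L$ precisely when all coordinates of index $\geqsl 2$ vanish; the value $t = 0$ is excluded since $\gamma_z(0) = a \notin L$. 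If $n \geqsl 3$, any unit $z$ with $z_i \neq 0$ for some $i \geqsl 3$ forces $\gamma_z(\RRR) \cap L = \varnothing$; there are clearly infinitely many such $z$ and, modulo the identification $z \leftrightarrow -z$, they give infinitely many distinct lines by \THM{line}(C). If $n = 2$, write $z = (\cos\theta,\sin\theta)$; dividing by $\cosh(t) > 0$, the second-coordinate condition becomes $\tanh(t)\sin\theta\sqrt{1+h^2} = -h$, which, since $|\tanh(t)| < 1$, admits no real solution whenever
\[
|\sin\theta| \leqsl \frac{h}{\sqrt{1+h^2}} < 1.
\]
This inequality cuts out a nondegenerate arc of directions (containing $\theta=0$), hence infinitely many hyperbolic lines through $a$ disjoint from $L$.

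The main obstacle is not conceptual but computational: one has to execute the coordinate computation for $T_{he_2}(\sinh(t)z)$, and most of the bookkeeping collapses thanks to the identity $1 + h^2/(\sqrt{1+h^2}+1) = \sqrt{1+h^2}$. The normalization step relies on the very same identity to verify that the first coordinate of $a$ can indeed be killed by a translation along $L$; once those routine simplifications are in place, both the cases $n \geqsl 3$ and $n = 2$ are essentially one-liners.
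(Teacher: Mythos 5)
Your proof is correct, but it is organized as the mirror image of the paper's argument, and the comparison is instructive. The paper normalizes the \emph{point} to the origin, so that the candidate lines through it are exactly the linear spans $\RRR c$ (no computation needed on that side), and spends its effort once on parametrizing the given line as $L=\{\sinh(t)a+\cosh(t)b\colon t\in\RRR\}$ via $T_y\circ\gamma_z$; disjointness of the span of $\mu a+b$ from $L$ for $|\mu|>1$ then reduces to the impossibility of $|\sinh(t)|>|\cosh(t)|$ --- the same ``$|\tanh|<1$'' fact that closes your argument. You instead normalize the \emph{line} to $\RRR e_1$, and pay for it twice: you must verify that a translation along $L$ can kill the first coordinate of $a$ (your value $s=-a_1/\sqrt{1+h^2}$ does check out, and the set-wise invariance of $\RRR e_1$ under $T_{se_1}$ follows from the uniqueness part of \THM{line}), and you must run the $T_y\circ\gamma_z$ computation for the whole family of candidate lines through $a$ rather than for the single line $L$. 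The paper's normalization is therefore leaner. What your version buys is a quantitative byproduct in dimension $2$: after reduction to $a=he_2$ and $L=\RRR e_1$, you read off that every direction $z=(\cos\theta,\sin\theta)$ with $|\sin\theta|\leqsl h/\sqrt{1+h^2}$ yields a line missing $L$, i.e.\ an explicit angle-of-parallelism bound, which the paper's family $L_\mu$, $|\mu|>1$, does not directly exhibit. Both the $n\geqsl 3$ and $n=2$ branches of your counting step are sound (distinctness of the lines modulo $z\leftrightarrow -z$ is exactly as you say), so no gap remains; only the routine computations you defer --- the formula for $T_{he_2}(\sinh(t)z)$ and the choice of $s$ --- would need to be written out.
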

\begin{proof}
Thanks to the metric homogeneity of $H^n(\RRR)$, we may and do assume that
$a = 0$. So, $L$ is a hyperbolic line that does not pass through $0$. We claim
that then there exist two linearly independent vectors $a$ and $b$ such that
\begin{equation}\label{eqn:L}
L = \{\sinh(t)a + \cosh(t)b\dd\ t \in \RRR\}.
\end{equation}
Indeed, we know that $L = \gamma(\RRR)$ where $\gamma = T_y \circ \gamma_z$,
$y \neq 0$, $\|z\| = 1$ and $\gamma_z$ is given by $\gamma_z(t) = \sinh(t) z$.
Then $\gamma(t) = \sinh(t)z+([\sinh(t)z]+\frac{\sinh(t)\scalar{z}{y}}{[y]+1})y
= \sinh(t)(z+\frac{\scalar{z}{y}}{[y]+1}y) + \cosh(t)y$. So, substituting $a =
z+\frac{\scalar{z}{y}}{[y]+1}y$ and $b = y$, it remains to check that $a$ and
$b$ are linearly independent to get \eqref{eqn:L}. If $a$ and $b$ were not such,
then $L$ would be a subset of the linear span $Y$ of $y$. But $Y$ is
a hyperbolic line and therefore we would obtain that $L = Y$ and hence $0 \in
L$. This proves \eqref{eqn:L}.\par
Now let $\mu \in \RRR$ be such that $|\mu| > 1$. Put $c = \mu a + b$ and let
$L_{\mu}$ be the linear span of $c$. We claim that $L_{\mu}$ is disjoint from
$L$. Indeed, let $s$ and $t$ be real and assume, on the contrary, that $sc =
\sinh(t)a+\cosh(t)b$ (cf. \eqref{eqn:L}). It follows from the linear
independence of $a$ and $b$ that $\sinh(t) = s\mu$ and $\cosh(t) = s$.
Consequently, $|\sinh(t)| = |\cosh(t) \mu| > |\cosh(t)|$, which is
impossible.\par
So, for any real $\mu$ with $|\mu| > 1$ the set $L_{\mu}$ is a hyperbolic line
disjoint from $L$ and passing through $0$. Since $a$ and $b$ are linearly
independent, these sets $L_{\mu}$ are all different.
\end{proof}

The reader interested in establishing further geometric properties by means of
the metric is referred to \cite{bi2}.

\section{Absolute vs. 3-point homogeneity}

Looking at a complicated formula for the hyperbolic metric it is a natural
temptation to search for a `simpler' realisation of non-Euclidean (hyperbolic)
geometry. Even if it is possible, a new model will not be as `perfect' as
the hyperbolic space described in this paper: its metric will not be geodesic or
else this space will not be $3$-point homogeneous. (Recall that, for a positive
integer $n$, a metric space $(X,d)$ is metrically \textit{$n$-point homogeneous}
if any isometric map $u\dd (A,d) \to (X,d)$ defined on a subset $A$ of $X$ that
has at most $n$ elements is extendable to an isometry $v\dd (X,d) \to (X,d)$.)
The above statement is a consequence of deep achievements (which we neither
discuss here in full details nor give their proofs) of the 50's of the 20th
century due to Wang \cite{wan}, Tits \cite{tit} and Freudenthal \cite{fr1}. They
classified \textsl{all} connected locally compact metric spaces that are
$2$-point homogeneous. All that follows is based on Freudenthal's work
\cite{fr1}; we also strongly recommend his paper \cite{fr2} where main results
of the former article are well discussed (see, e.g., subsection 2.21
therein).\par
To formulate the main result on the classification, \textbf{up to isometry}, of
all connected locally compact $3$-point homogeneous metric spaces, let us
introduce necessary notions.\par
For any positive integer $n$ let $\SSS^n$ stand for the Euclidean unit sphere in
$\RRR^{n+1}$:
\[\SSS^n = \{x \in \RRR^{n+1}\dd\ \|x\| = 1\}.\]
We equip $\SSS^n$ with metric $d_s$ of the \textsl{great-circle distance}, given
by \[d_s(x,y) = \frac{1}{\pi}\arccos(\scalar{x}{y}) = \frac{1}{\pi}
\arccos\frac{2-d_e(x,y)^2}{2}.\]
It is well-known (and easy to prove) that $d_s$ is a metric equivalent to $d_e$
(restricted to $\SSS^n$) such that the metric space $(\SSS^n,d_s)$ has
the following properties:
\begin{itemize}
\item it is geodesic and absolutely homogeneous;
\item it has diameter $1$;
\item any two points of $\SSS^n$ whose $d_s$-distance is smaller than $1$ (that
 is, which are not antipodal) can be joint by a \textbf{unique} straight line
 segment.
\end{itemize}\par
Further, for any subinterval $I$ of $[0,\infty)$ containing $0$ let $\Omega(I)$
be the set of all continuous functions $\omega\dd I \to [0,\infty)$ that vanish
at $0$ and satisfy the following two conditions for all $x, y \in I$:
\begin{enumerate}[($\omega$1)]
\item $x < y \implies \omega(x) < \omega(y)$;
\item $\omega(x+y) \leqsl \omega(x)+\omega(y)$ provided $x+y \in I$.
\end{enumerate}
Finally, let $\Omega = \Omega_{\infty} = \Omega([0,\infty))$ and $\Omega_1 =
\Omega([0,1])$. For any $\omega \in \Omega$ there exists $\lim_{t\to\infty}
\omega(t) \in [0,\infty]$, which will be denoted by $\omega(\infty)$.\par
After all these preparations, we are ready to formulate the main result of this
section.

\begin{thm}{classification}
Each connected locally compact $3$-point homogeneous metric space having more
than one point is isometric to exactly one metric space $(X,d)$ among all listed
below (everywhere below $n$ denotes a positive integer).
\begin{itemize}
\item $X = \RRR^n$, $d = \omega \circ d_e$ where $\omega \in \Omega$ and
 $\omega(1) = \min(1,\frac12\omega(\infty))$.
\item $X = \SSS^n$, $d = \omega \circ d_s$ where $\omega \in \Omega_1$.
\item $X = H^n(\RRR)$, $d = \omega \circ d_h$ where $n > 1$ and $\omega \in
 \Omega$.
\end{itemize}
In particular:
\begin{itemize}
\item all connected locally compact $3$-point homogeneous metric spaces are
 absolutely homogeneous, and each of them is homeomorphic either to a Euclidean
 space or to a Euclidean sphere (unless it has at most one point);
\item each locally compact geodesic $3$-point homogeneous metric space having
 more than one point is isometric to exactly one of the spaces: $(\RRR^n,d_e)$,
 $(\SSS^n,r d_s)$ (where $r > 0$), $(H^n(\RRR),r d_h)$ (where $n > 1$ and
 $r > 0$).
\end{itemize}
\end{thm}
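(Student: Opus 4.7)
The plan is to invoke Freudenthal's classification \THM{freu} of connected locally compact $2$-point homogeneous metric spaces as a black box, then carve out the $3$-point homogeneous subcase and finally pin down the allowed forms of $\omega$. Since $3$-point homogeneity trivially entails $2$-point homogeneity, every $(X,d)$ satisfying the hypotheses appears on Freudenthal's list, which (up to isometry) presents $(X,d)$ as $(M,\omega\circ d_0)$ for $M$ one of $\RRR^n$, $\SSS^n$, $H^n(\RRR)$, or a projective space over $\RRR$, $\mathbb{C}$, $\mathbb{H}$, or the Cayley algebra, with $d_0$ the canonical distance on $M$ and $\omega$ a continuous, strictly increasing function vanishing at $0$. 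The structural conditions $(\omega 1)$ and $(\omega 2)$ in the definition of $\Omega$ drop out for free: $(\omega 1)$ from the need for $\omega\circ d_0$ to separate points, and $(\omega 2)$ by applying the triangle inequality of $\omega\circ d_0$ to a triple collinear with respect to $d_0$, which exists because the canonical metrics are geodesic.

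Next I would exploit $3$-point homogeneity to eliminate every projective model. In each case one exhibits two triples of points with identical pairwise $d_0$-distances but lying in distinct orbits of the isometry group, so that no $\omega$ can force them to be isometric. For $\RRR P^n$ with $n\geqsl 2$ one compares a coplanar triple of lines with a non-coplanar triple sharing the same three pairwise angles; such pairs already exist in dimension three and belong to different orbits of $O(n+1)$ because the dimension of the linear span is preserved. For $\mathbb{C} P^n$, $\mathbb{H} P^n$ and $\mathbb{O} P^2$ one uses a triple-product invariant (essentially the phase of $\scalar{v_1}{v_2}\scalar{v_2}{v_3}\scalar{v_3}{v_1}$ for unit representatives of the three points), which is isometry-invariant but not determined by the pairwise moduli. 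Once these are discarded only the Euclidean, spherical and hyperbolic models remain, and \COR{n=1} absorbs the $n=1$ hyperbolic case into the Euclidean one.

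I would then fix the admissible domain and normalisation of $\omega$ in each surviving case. The sphere forces $\omega\in\Omega_1$ since $d_s$ takes values in $[0,1]$, and no further rescaling is possible because $\SSS^n$ carries no non-trivial similarities. For $H^n(\RRR)$ with $n>1$, \THM{dilation} ensures that distinct $\omega\in\Omega$ produce non-isometric spaces, so no normalisation is imposed. In the Euclidean case the family of dilations $x\mapsto\lambda x$ identifies $\omega$ with $\omega(\lambda\,\cdot)$, so a single representative per isometry class must be pinned down; the condition $\omega(1)=\min(1,\tfrac12\omega(\infty))$ is precisely this selection, its first clause covering the unbounded regime of $\omega$ and its second the bounded one. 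The converse, that each listed space is genuinely $3$-point homogeneous, follows as for $H^n(\RRR)$: canonical triples are determined by pairwise $d_0$-distances and the required isometries of the canonical space descend through the monotone $\omega$. The concluding bullet points on absolute and geodesic $3$-point homogeneous spaces reduce to remarking that geodesy of $\omega\circ d_0$ forces $\omega$ to be linear, whence $\omega(t)=rt$ and we recover the standard scaled models.

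The main obstacle is that \THM{freu} itself is taken on faith: essentially all the real depth of the classification sits inside that theorem. A secondary subtlety is the Euclidean normalisation: one has to verify, first, that every $\omega\in\Omega$ satisfying $\omega(1)=\min(1,\tfrac12\omega(\infty))$ is actually realised on $\RRR^n$, and second, that no two such $\omega$ yield isometric spaces. Both require a careful book-keeping of how the scaling action of $\RRR^n$ reshuffles the parameter space $\Omega$ and, in the bounded regime, how diameter and mid-diameter interact to uniquely identify an equivalence class.
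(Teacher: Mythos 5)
Your overall architecture (Freudenthal as a black box, elimination of projective models via a triple invariant, then normalisation of $\omega$) matches the paper's, but there are two concrete gaps. First, the black box you actually describe is the classification of connected locally compact \emph{$2$-point} homogeneous spaces, and you misstate its list: besides the projective spaces over $\mathbb{C}$, $\mathbb{H}$ and the Cayley algebra it also contains the corresponding non-compact rank-one spaces (complex and quaternionic hyperbolic spaces and the Cayley hyperbolic plane), which are connected, locally compact, geodesic and $2$-point homogeneous. Your elimination step discards only the projective models, so these hyperbolic-type models survive to your final list, where they do not belong (they fail $3$-point homogeneity for essentially the same triple-product reason, but you never address them). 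The paper avoids this issue entirely: the version of Freudenthal's theorem it cites (\THM{freu}, Hauptsatz~IV) has a hypothesis strictly stronger than $2$-point homogeneity --- it already demands extendability of certain $3$-point partial isometries --- and its conclusion lists only $(\RRR^n,d_e)$, $(\SSS^n,d_s)$, $(H^n(\RRR),d_h)$ and $(P^n(\RRR),d_p)$, so that only the real projective space has to be excluded, which \LEM{proj} does with an explicit pair of triples realising opposite signs of the triple product (essentially your invariant).

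Second, Freudenthal's conclusion is not that $(X,d)$ ``is presented as $(M,\omega\circ d_0)$'': it only provides a homeomorphism $h\dd Z\to M$ conjugating $\Iso(Z,\lambda)$ onto $\Iso(M,\varrho)$. Producing a function $\omega$ with $\lambda=\omega\circ\varrho\circ(h\times h)$ is a genuine step (the paper's \LEM{2-point}): one uses $2$-point homogeneity of both spaces to show that $\lambda(a,b)=\lambda(x,y)$ iff $\varrho(h(a),h(b))=\varrho(h(x),h(y))$, and then \LEM{omega} to obtain continuity, strict monotonicity and subadditivity of $\omega$; continuity in particular is not automatic and your sketch omits it (the paper derives continuity at $0$ from topological equivalence of the metrics and propagates it via subadditivity). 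The rest of your outline --- \THM{dilation} for hyperbolic rigidity, absence of non-trivial similarities of the sphere, the scaling normalisation $\omega(1)=\min(1,\frac12\omega(\infty))$ in the Euclidean case, and linearity of $\omega$ in the geodesic case --- is faithful to the paper's argument; you should, however, also justify that spaces of \emph{different} types on the list are non-isometric, which the paper gets from the uniqueness clause of \THM{freu} and which your proposal never invokes.
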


\begin{rem}{b-b}
Busemann's \cite{bus} and Birkhoff's \cite{bi1,bi2} results have a similar
spirit. However, both of them assume, beside metric convexity, also a sort of
uniqueness of straight line segments joining sufficiently close two points, and
their statements are less general.
\end{rem}

It seems that the assertion of \THM{classification} (in this form) has never
appeared in the literature. Nevertheless, we consider this result as
Freudenthal's theorem---not ours. Such a thinking is justified by the fact that
\THM{classification} easily follows from Freudenthal's theorem, stated below,
proved in \cite{fr1}.\par
Denoting by $P^n(\RRR)$ the $n$-dimensional real projective space (realised as
the quotient space of $\SSS^n$ obtained by gluing antipodal points) equipped
with a geodesic metric $d_p(\bar{u},\bar{v}) = \frac{2}{\pi}
\arccos(|\scalar{u}{v}|)$ (where $u, v \in \SSS^n$ and $\bar{u} = \{u,-u\}$ and
$\bar{v} = \{v,-v\}$ denote their equivalence classes belonging to $P^n(\RRR)$),
we can formulate Freudenthal's \textsl{Hauptsatz~IV} from \cite{fr1} as follows.

\begin{thm}{freu}
Let $(Z,\lambda)$ be a connected locally compact metric space that satisfies
the following condition:
\begin{center}
\fbox{\parbox{110mm}{%
There are two positive reals $\gamma$ and $\gamma'$ such that:
\begin{enumerate}[\upshape(a)]
\item $\gamma < \lambda(v,w)$ for some $v, w \in Z$;
\item $\gamma' < \lambda(a,b)$ for some $a, b \in Z$ for which there is $c \in
 Z$ with $\lambda(a,c) = \lambda(b,c) = \gamma$;
\item any isometric map $u\dd (A,\lambda) \to (Z,\lambda)$ defined on
 an arbitrary subset $A$ of $Z$ of the form:
 \begin{itemize}
 \item[$\bullet$] $A = \{x,y\}$ where $\lambda(x,y) = \gamma$, or
 \item[$\bullet$] $A = \{x,y,z\}$ where $\lambda(x,y) = \lambda(y,z) = \gamma$
  and $\lambda(x,z) = \gamma'$
 \end{itemize}
 is extendable to an isometry $v\dd (Z,\lambda) \to (Z,\lambda)$.
\end{enumerate}%
}}
\end{center}
Then there is a unique space $(M,\varrho)$ among $(\RRR^n,d_e)\ (n > 0)$,
$(\SSS^n,d_s)\ (n > 0)$, $(H^n(\RRR),d_h)\ (n > 1)$, $(P^n(\RRR),d_p)\ (n > 1)$
and a homeomorphism $h\dd Z \to M$ such that:
\begin{equation}\label{eqn:iso}
\{h \circ u \circ h^{-1}\dd\ u \in \Iso(Z,\lambda)\} = \Iso(M,\varrho).
\end{equation}
\end{thm}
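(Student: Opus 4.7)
My plan is to recover the model space $(M,\varrho)$ and the conjugating homeomorphism $h$ in three stages: first, to bootstrap the restricted homogeneity supplied by condition~(c) to full \emph{local} two-point homogeneity of $(Z,\lambda)$; second, to invoke the Montgomery--Zippin solution of Hilbert's fifth problem to turn $G := \Iso(Z,\lambda)$ into a Lie transformation group acting transitively on $Z$; and third, to use the resulting linear isotropy representation, together with the classification of compact connected Lie groups acting transitively on Euclidean spheres, to identify $(Z,\lambda)$, up to a topological identification witnessing~\eqref{eqn:iso}, with one of the four listed model spaces.

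In the first stage I would check that the isometries furnished by condition~(c) already act transitively on $Z$ and, by iterated composition, move any pair of points at distance $\gamma$ to any other such pair. Using local compactness to extract convergent subsequences of isometries, and using conditions~(a) and~(b) to ensure that sufficiently many $\gamma$-configurations and $(\gamma,\gamma,\gamma')$-triangles exist everywhere, an Arzel\`a--Ascoli argument should propagate this to a pre-compact family of local isometries carrying any two sufficiently close points to any other pair at the same distance. By connectedness, local two-point homogeneity then globalises, so that the point stabiliser $H := \stab(p)$ acts transitively on each metric sphere of small radius around a base point $p \in Z$.

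The second stage is a direct invocation of Montgomery--Zippin: a locally compact, connected, finite-dimensional topological group acting effectively and transitively on a connected, locally compact, finite-dimensional space must be a Lie group, and the action is smooth. Hence $Z \cong G/H$ is a smooth homogeneous manifold, $H$ is a compact Lie subgroup, and its isotropy representation on $T_p Z$ is orthogonal and transitive on the unit sphere. The Borel--Montgomery--Samelson list of transitive compact actions on spheres then constrains $H$ severely, and the geodesic symmetry at $p$ obtained by extending the involution $x \mapsto -x$ on a single geodesic through $p$ (itself produced from condition~(c) applied to collinear $\gamma$-configurations) forces $(Z,\lambda)$ to be a Riemannian symmetric space of rank one. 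The remaining hypotheses (real scalars; no hidden complex, quaternionic or Cayley structure surviving in the metric) single out $(\RRR^n,d_e)$, $(\SSS^n,d_s)$, $(H^n(\RRR),d_h)$ and $(P^n(\RRR),d_p)$; the homeomorphism $h$ and the identity~\eqref{eqn:iso} are then built into the identification.

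The main obstacle is the first stage. The hypothesis supplies isometries only for two very specific metric configurations of fixed sizes $\gamma$ and $\gamma'$; one must bootstrap from these to transitivity of $H$ on infinitesimal directions before any Lie-theoretic machinery becomes applicable, and this bootstrap is purely metric and combinatorial in character. Freudenthal accomplishes it in \cite{fr1} by a delicate analysis of equilateral $\gamma$-nets and their rigidity, and this is the genuinely deep content of \textsl{Hauptsatz~IV}; it is also the reason the present paper treats the result as a black box rather than reproducing its proof.
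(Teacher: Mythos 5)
The paper does not prove this statement at all: it is quoted verbatim as Freudenthal's \textsl{Hauptsatz~IV} and used as a black box, with an explicit disclaimer in the introduction that its proof is the one omission from an otherwise self-contained presentation. So there is no in-paper argument to compare yours against, and the only question is whether your outline amounts to a proof. It does not, and you concede as much: the entire first stage---bootstrapping from the extension property for the two fixed configurations (pairs at distance $\gamma$ and $(\gamma,\gamma,\gamma')$-triangles) to local two-point homogeneity and transitivity of the stabiliser on small metric spheres---is exactly the ``delicate analysis of equilateral $\gamma$-nets'' that you defer back to \cite{fr1}. Deferring the genuinely hard step to the reference whose theorem you are supposed to be proving leaves the proposal circular as a proof attempt, however accurate it may be as a description of the literature.

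Beyond that acknowledged gap, two further steps in your sketch would fail as written. First, Montgomery--Zippin requires knowing that $\Iso(Z,\lambda)$ is locally compact and that $Z$ is finite-dimensional; neither is a hypothesis of the theorem ($Z$ is only assumed connected and locally compact, and $\lambda$ is not assumed proper), so both must be extracted from the homogeneity, which is part of the work you are skipping. Second, and more substantively, your elimination of the non-real rank-one symmetric spaces by appeal to ``real scalars; no hidden complex, quaternionic or Cayley structure surviving in the metric'' begs the question: nothing in the hypotheses mentions scalars, and the complex, quaternionic and octonionic projective and hyperbolic spaces \emph{are} two-point homogeneous, so they survive every step of your argument up to that point. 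The only thing that can exclude them is the $(\gamma,\gamma,\gamma')$-triangle condition in (c)---note that it must be calibrated finely enough to exclude $P^n(\mathbb{C})$ while still admitting $P^n(\RRR)$, which by \LEM{proj} is itself not $3$-point homogeneous---and your sketch never engages with it. Finally, even granting the identification of $Z$ with a model space $M$, the conclusion \eqref{eqn:iso} asserts that $h$ conjugates the \emph{full} group $\Iso(Z,\lambda)$ onto the \emph{full} group $\Iso(M,\varrho)$, not merely that some transitive subgroup is carried into it; that equality needs its own argument.
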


We prove \THM{classification} in few steps, each formulated as a separate
lemma. All of them are already known but---for the reader's convenience---we
give their short proofs.

\begin{lem}{proj}
For $n > 1$ the space $(P^n(\RRR),d_p)$ is $2$-point, but not $3$-point,
homogeneous.
\end{lem}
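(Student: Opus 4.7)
The plan is to establish two separate claims: $2$-point homogeneity of $(P^n(\RRR),d_p)$, and the failure of $3$-point homogeneity.

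For $2$-point homogeneity I would use the action of $O(n+1)$ on $\SSS^n$. Since $-I$ commutes with every orthogonal transformation and generates the deck group of the double cover $\SSS^n\to P^n(\RRR)$, each $U\in O(n+1)$ descends to an isometry $\tilde U(\bar x)=\overline{Ux}$ of $(P^n(\RRR),d_p)$. Given an isometric map $u_0\dd\{\bar x_1,\bar y_1\}\to P^n(\RRR)$ with $u_0(\bar x_1)=\bar x_2$ and $u_0(\bar y_1)=\bar y_2$, the defining formula for $d_p$ gives $|\scalar{x_1}{y_1}|=|\scalar{x_2}{y_2}|$ for any lifts; after replacing $y_i$ by $-y_i$ if needed we may assume $\scalar{x_1}{y_1}=\scalar{x_2}{y_2}\geqsl 0$. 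A standard basis-completion argument now yields $U\in O(n+1)$ with $Ux_1=x_2$, $Uy_1=y_2$, so $\tilde U$ extends $u_0$. (The $|A|=1$ case is just the transitivity of $O(n+1)$ on $\SSS^n$.)

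For the failure of $3$-point homogeneity I would exhibit two triples in $P^n(\RRR)$ with identical pairwise distances but of incompatible geometric type. Using $n\geqsl 2$, set
\[ w_1=e_1,\quad w_2=\tfrac{1}{2}e_1+\tfrac{\sqrt{3}}{2}e_2,\quad w_3=\tfrac{1}{2}e_1-\tfrac{\sqrt{3}}{2}e_2, \]
\[ v_1=e_1,\quad v_2=\tfrac{1}{2}e_1+\tfrac{\sqrt{3}}{2}e_2,\quad v_3=\tfrac{1}{2}e_1+\tfrac{1}{2\sqrt{3}}e_2+\sqrt{2/3}\,e_3. \]
All six are unit vectors, and a direct check gives $|\scalar{w_i}{w_j}|=|\scalar{v_i}{v_j}|=\tfrac{1}{2}$ for $i\neq j$, so every pairwise $d_p$-distance equals $\tfrac{2}{3}$. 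To distinguish the two triples metrically I would use midpoints. A preliminary lemma asserts that whenever $d_p(\bar p,\bar q)<1$, the pair $\{\bar p,\bar q\}$ admits a \emph{unique} midpoint in $P^n(\RRR)$: any midpoint lifts to $m\in\SSS^n$ with $|\scalar{p}{m}|=|\scalar{q}{m}|=\cos(\alpha/2)$ where $\alpha=(\pi/2)d_p(\bar p,\bar q)\in[0,\pi/2)$; the sign choice $\scalar{p}{m}=\scalar{q}{m}=\cos(\alpha/2)$ forces $m=(p+q)/\|p+q\|$, while the opposite choice forces $\|m\|^2=\cot^2(\alpha/2)+\|e\|^2$ for some $e\perp p,q$, which cannot equal $1$ when $\alpha<\pi/2$.

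Granted this uniqueness, any global isometry $\phi$ with $\phi(\bar w_i)=\bar v_i$ must send the unique midpoint of $\bar w_1,\bar w_2$ to the unique midpoint of $\bar v_1,\bar v_2$; since $w_1=v_1$ and $w_2=v_2$, both midpoints coincide with $\bar m$ for $m=(\tfrac{\sqrt{3}}{2},\tfrac{1}{2},0,\ldots,0)$. Then isometry would force $d_p(\bar m,\bar w_3)=d_p(\bar m,\bar v_3)$; but $\scalar{m}{w_3}=0$ makes the left side equal to $1$, while $\scalar{m}{v_3}=\tfrac{1}{\sqrt{3}}$ makes the right side $\tfrac{2}{\pi}\arccos(1/\sqrt{3})<1$, a contradiction. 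The main obstacle is the uniqueness-of-midpoints lemma; everything else is a finite calculation. An alternative route would be to invoke the classical identification $\Iso(P^n(\RRR),d_p)=O(n+1)/\{\pm I\}$ and argue from the fact that lifts of the $v_i$'s span a $3$-dimensional subspace while lifts of the $w_i$'s span only a $2$-dimensional one (orthogonal maps preserve span dimension), but that depends on external input about the isometry group, whereas the midpoint argument stays within the framework already developed in this paper.
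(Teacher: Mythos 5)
Your proposal is correct, and while the first half follows the paper's route, the second half is genuinely different. For $2$-point homogeneity you do essentially what the paper does: lift to $\SSS^n$, normalise signs so that the inner products agree, and push down an orthogonal map --- this part needs no comparison. For the failure of $3$-point homogeneity the paper also picks two triples with equal pairwise $d_p$-distances, but then it rules out an isometry carrying one to the other by invoking the surjectivity of the map $O_{n+1} \ni B \mapsto \bar{B} \in \Iso(P^n(\RRR),d_p)$, a fact it explicitly flags as ``classical, but non-trivial'' and does not prove; this lets it reduce everything to a two-line inner-product computation with a lift $B \in O_{n+1}$. You instead stay entirely inside the metric space: your midpoint-uniqueness lemma is correct (writing $m = ap+bq+e$ with $e \perp p,q$ and lifts chosen so that $\scalar{p}{q} = \cos\alpha \geqsl 0$, the consistent sign choice gives $\|ap+bq\| = 1$ hence $e=0$, and the opposite sign choice gives $\|ap+bq\|^2 = \cot^2(\alpha/2) > 1$ for $\alpha < \pi/2$), your six vectors are unit vectors with all pairwise $|\scalar{\cdot}{\cdot}| = \tfrac12$, and the final computation $\scalar{m}{w_3} = 0$ versus $\scalar{m}{v_3} = 1/\sqrt{3}$ checks out. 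The trade-off is clear: the paper's argument is shorter but leans on an unproved classical identification of the isometry group, whereas yours is longer but self-contained and purely metric, which is arguably more in the spirit of the paper's stated aims. Both counterexamples live in the $n=2$ slice and extend by padding with zeros, so there is no loss of generality in either version.
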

\begin{proof}
We use here the notation introduced in the paragraph preceding the formulation
of \THM{freu}.\par
Let $(u,v)$ and $(x,y)$ be two pairs of points of $\SSS^n$ such that
$d_p(\bar{u},\bar{v}) = d_p(\bar{x},\bar{y})$. This means that there is $\epsi
\in \{1,-1\}$ such that $d_s(u,v) = d_s(x,\epsi y)$. Consequently, there exists
$A \in O_{n+1}$ (which is automatically an isometry with respect to $d_s$) such
that $A(u) = x$ and $A(v) = \epsi y$. Every member $B$ of $O_{n+1}$ naturally
induces an isometry $\bar{B}\dd P^n(\RRR) \to P^n(\RRR)$ that satisfies
$\bar{B}(\bar{z}) = \overline{B(z)}$ for any $z \in \SSS^n$. Moreover,
the assignment
\begin{equation}\label{eqn:isoPn}
O_{n+1} \ni B \mapsto \bar{B} \in \Iso(P^n(\RRR),d_p)
\end{equation}
is surjective (this is classical, but non-trivial). So, $\bar{A} \in
\Iso(P^n(\RRR),d_p)$ satisfies $\bar{A}(\bar{u}) = \bar{x}$ and
$\bar{A}(\bar{v}) = \bar{y}$, which shows that $P^n(\RRR)$ is $2$-point
homogeneous.\par
To convince oneself that $P^n(\RRR)$ is not $3$-point homogeneous for $n > 1$,
it is enough to consider two triples $(x,y,z_1)$ and $(x,y,z_2)$ of points of
$\SSS^n$ with $x = (1,0,0,\vec{0}\,)$, $y =
(\frac{\sqrt{2}}{2},\frac{\sqrt{2}}{2},0,\vec{0}\,)$, $z_1 =
(\frac14,\frac14,\frac{\sqrt{14}}{4},\vec{0}\,)$ and $z_2 =
(\frac14,-\frac34,\frac{\sqrt{6}}{4},\vec{0}\,)$ where $\vec{0}$ denotes
the zero vector in $\RRR^{n-2}$ (which has to be omitted when $n = 2$). Observe
that $\scalar{x}{z_1} = \scalar{x}{z_2}$ and $\scalar{y}{z_1} =
-\scalar{y}{z_2}$. Consequently, $d_p(\bar{w},\bar{z}_1) =
d_p(\bar{w},\bar{z}_2)$ for any $w \in \{x,y\}$. If $P^n(\RRR)$ was $3$-point
homogeneous, there would exist an isometry of $P^n(\RRR)$ that fixes $x$ and $y$
and sends $z_1$ onto $z_2$, which is impossible. To see this, assume---on
the contrary---there exists such an isometry. It then follows from
the surjectivity of \eqref{eqn:isoPn} that there exists $B \in O_{n+1}$ such
that $B(x) = \pm x$, $B(y) = \pm y$ and $B(z_1) = \pm z_2$. Then, replacing if
needed $B$ by $-B$, we may and do assume $B(x) = x$ and thus, since
$\scalar{B(x)}{B(u)} = \scalar{x}{u}$ for any $u \in \RRR^{n+1}$, also $B(y) =
y$ and $B(z_1) = z_2$. But then $\scalar{y}{z_2} = \scalar{B(y)}{B(z_1)} =
\scalar{y}{z_1}$ which is false.
\end{proof}

\begin{lem}{omega}
Let $(M,\varrho)$ be a $2$-point homogeneous geodesic metric space having more
than one point and $I$ denote $\varrho(M \times M)$, and let $f\dd I \to
[0,\infty)$ be a one-to-one function such that $\varrho_f = f \circ \varrho$ is
a metric on $M$ equivalent to $\varrho$. Then:
\begin{enumerate}[\upshape(v1)]\addtocounter{enumi}{-1}
\item $I$ is an interval and $f \in \Omega(I)$;
\item $(M,\varrho_f)$ is $2$-point homogeneous;
\item $\Iso(M,\varrho_f) = \Iso(M,\varrho)$;
\item $(M,\varrho_f)$ is absolutely homogeneous iff so is $(M,\varrho)$;
\item $(M,\varrho_f)$ is geodesic iff $f(t) = c t$ for some positive constant
 $c$ (and all $t \in I$).
\end{enumerate}
\end{lem}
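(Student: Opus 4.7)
I will address the five items in order, since (v1)--(v3) are formal consequences of (v0) together with a single injectivity observation, while (v4) is the substantive point.

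\emph{(v0).} Given $r = \varrho(a,b) \in I$, the dilation $\gamma\dd [0,1]\to M$ witnessing geodesity of $(M,\varrho)$ yields $\varrho(a,\gamma(s))=sr$, so $[0,r] \subseteq I$; hence $I$ is an interval containing $0$. Injectivity of $f$ together with $\varrho_f(x,x)=0$ forces $f(0)=0$. For continuity of $f$ at $t \in I$, given $t_n \to t$ in $I$, I pick $r \in I$ majorising $t$ and all sufficiently late $t_n$, and a $\varrho$-dilation $\gamma\dd [0,1]\to M$ from some $a$ of $\varrho$-length $r$; then $\gamma(t_n/r)\to\gamma(t/r)$ under $\varrho$, hence under $\varrho_f$ by equivalence of the metrics, giving $f(t_n)=\varrho_f(a,\gamma(t_n/r))\to\varrho_f(a,\gamma(t/r))=f(t)$. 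A continuous injection on an interval is strictly monotone, and $f(0)=0\leqsl f$ rules out the decreasing case, yielding ($\omega$1). For ($\omega$2), given $x,y$ with $x+y\in I$, I take $a,c$ at $\varrho$-distance $x+y$ and the intermediate point $b$ on a $\varrho$-geodesic with $\varrho(a,b)=x$ and $\varrho(b,c)=y$; the $\varrho_f$-triangle inequality reads $f(x+y)\leqsl f(x)+f(y)$.

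\emph{(v1)--(v3).} All three items rest on the single observation that, because $f$ is one-to-one, a map $u$ between subsets of $M$ preserves $\varrho_f$ iff it preserves $\varrho$. Hence every $\varrho_f$-isometry between two-point (resp.\ arbitrary) subsets is a $\varrho$-isometry, its extension to a global $\varrho$-isometry supplied by $2$-point (resp.\ absolute) homogeneity of $(M,\varrho)$ is again a $\varrho_f$-isometry by the same observation, and the reverse implications hold by symmetry. This gives (v1), the equality $\Iso(M,\varrho_f)=\Iso(M,\varrho)$ in (v2), and the biconditional in (v3).

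\emph{(v4), the heart of the argument.} The ``if'' direction is immediate: when $f(t)=ct$ we have $\varrho_f=c\varrho$, so every $\varrho$-dilation is a $\varrho_f$-dilation. For the converse, assume $(M,\varrho_f)$ is geodesic and fix $x,y>0$ with $x+y\in I$. Pick $a,c$ with $\varrho(a,c)=x+y$, so $\varrho_f(a,c)=f(x+y)$, and let $\gamma\dd [0,1]\to M$ be a $\varrho_f$-dilation from $a$ to $c$. Setting $b:=\gamma(f(x)/f(x+y))$ (the quotient lies in $(0,1)$ by strict monotonicity of $f$), I obtain $\varrho_f(a,b)=f(x)$ and $\varrho_f(b,c)=f(x+y)-f(x)$; injectivity of $f$ translates these to $\varrho(a,b)=x$ and $\varrho(b,c)=f^{-1}(f(x+y)-f(x))$. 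The $\varrho$-triangle inequality $x+y\leqsl x+f^{-1}(f(x+y)-f(x))$ rearranges to $f(x+y)\geqsl f(x)+f(y)$, and combined with ($\omega$2) this is equality. Thus $f$ is additive on $I$ and continuous, so $f(t)=ct$ for $t \in I$, with $c=f(r)/r>0$ for any fixed $r>0$ in $I$ (positivity of $c$ from injectivity and $f(0)=0$).

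\textbf{Main obstacle.} The nontrivial step is the converse in (v4): extracting additivity of $f$ from geodesity of $\varrho_f$. The trick is choosing the intermediate point $b$ on the $\varrho_f$-geodesic at the specific parameter $f(x)/f(x+y)$, so that $\varrho_f(a,b)$ lands exactly on $f(x)$---the unique value whose image under $f^{-1}$ produces the clean identity $\varrho(a,b)=x$ needed to activate the $\varrho$-triangle inequality in the form that yields superadditivity. Everything else is a formal unwinding of ``$f$ is a continuous, strictly increasing injection conjugating the two metrics.''
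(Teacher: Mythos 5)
Your proof is correct and follows essentially the same route as the paper: (v1)--(v3) from the observation that injectivity of $f$ makes $\varrho$- and $\varrho_f$-isometric maps coincide, ($\omega$2) from the triangle inequality for $\varrho_f$ applied to three points on a $\varrho$-geodesic, and the converse of (v4) from the same mechanism run in the opposite direction. The only organizational difference is that in (v4) you derive superadditivity of $f$ directly, by placing $b=\gamma(f(x)/f(x+y))$ on a $\varrho_f$-geodesic and invoking the $\varrho$-triangle inequality, whereas the paper packages this as ``apply part (v0) to $(M,\varrho_f)$ to get $f^{-1}\in\Omega(J)$ and combine the two subadditivities''---your direct version even sidesteps the small point of checking that $f(x)+f(y)$ lies in $f(I)$.
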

\begin{proof}
Since $f$ is one-to-one, any map $u\dd A \to M$ (where $A \subset M$) is
isometric with respect to $\varrho_f$ if and only if it is isometric with
respect to $\varrho$. This implies (v1), (v2) and (v3). It follows from
$1$-point homogeneity of $(M,\varrho)$ that $I = \{\varrho(a,x)\dd\ x \in M\}$
where $a$ is arbitrarily fixed element of $M$. But this, combined with
continuity of $\varrho_f$ with respect to $\varrho$, yields that $f$ is
continuous at $0$. Further, since $M$ is geodesic, $I$ is an interval and for
any $x, y \in I$ with $x+y \in I$ there are points $a, b, c \in M$ such that
$\varrho(a,c) = x+y$, $\varrho(a,b) = x$ and $\varrho(b,c) = y$. Then $f(x+y) =
\varrho_f(a,c) \leqsl \varrho_f(a,b)+\varrho_f(b,c) = f(x)+f(y)$. So,
($\omega$2) holds. This implies that $|\omega(x)-\omega(y)| \leqsl
\omega(|x-y|)$ for any $x, y \in I$. Thus $f$, being continuous at $0$, is
continuous (at each point of $I$). Finally, a continuous one-to-one function
vanishing at $0$ satisfies ($\omega$1) and therefore $f \in \Omega(I)$.\par
It remains to show that if $\varrho_f$ is geodesic, then $f$ is linear. Since
$f \in \Omega(I)$, it is a bijection between $I$ and $J = f(I)$, and $J$ is
an interval. Moreover, $\varrho = f^{-1} \circ \varrho_f$. So, assuming that
$\varrho_f$ is geodesic, it follows from the first part of the proof that also
$f^{-1} \in \Omega(J)$. But if $f \in \Omega(I)$ and $f^{-1} \in \Omega(J)$,
then $f(x+y) = f(x)+f(y)$ for all $x, y \in I$ with $x+y \in I$. The last
equation implies that $f$ is linear (for $f$ is continuous).
\end{proof}

\begin{lem}{2-point}
Assume $(Z,\lambda)$ and $(M,\varrho)$ are two $2$-point homogeneous metric
spaces having more than one point and satisfying the following three conditions:
\begin{itemize}
\item $(M,\varrho)$ is geodesic;
\item there exists $R \in \{1,\infty\}$ such that $\varrho(M \times M) = \{r \in
 \RRR\dd\ 0 \leqsl r \leqsl R\}$;
\item there exists a homeomorphism $h\dd Z \to M$ such that \eqref{eqn:iso}
 holds.
\end{itemize}
Then there exists a unique $\omega \in \Omega_R$ such that $\lambda = \omega
\circ \varrho \circ (h \times h)$. Moreover, $(Z,\lambda)$ is $3$-point or
absolutely homogeneous iff so is $(M,\varrho)$.
\end{lem}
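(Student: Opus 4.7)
The plan is to push $\lambda$ through $h$ to a metric on $M$, read off $\omega$ from the result, and then invoke \LEM{omega} to place $\omega$ in $\Omega_R$.

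First I would define $\omega\colon[0,R]\to[0,\infty)$ by the prescription $\omega(\varrho(a,b)):=\lambda(h^{-1}(a),h^{-1}(b))$. The decisive well-definedness check uses $2$-point homogeneity of $(M,\varrho)$: if $\varrho(a,b)=\varrho(a',b')$, choose $u\in\Iso(M,\varrho)$ mapping $(a,b)$ to $(a',b')$; hypothesis \eqref{eqn:iso} then places $h^{-1}\circ u\circ h$ in $\Iso(Z,\lambda)$, forcing the two $\lambda$-distances to coincide. The identity $\lambda=\omega\circ\varrho\circ(h\times h)$ is then automatic, and uniqueness of $\omega$ is immediate because every $t\in[0,R]$ is realised as a $\varrho$-distance. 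Running the symmetric construction using $2$-point homogeneity of $(Z,\lambda)$ and \eqref{eqn:iso} produces a map $\tilde\omega\colon\lambda(Z\times Z)\to[0,R]$ sending $\lambda(x,y)$ to $\varrho(h(x),h(y))$, and a direct check gives $\tilde\omega\circ\omega=\mathrm{id}_{[0,R]}$, so $\omega$ is one-to-one.

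Next I would unlock \LEM{omega}. The identity $\lambda=\omega\circ\varrho\circ(h\times h)$ exhibits $h\colon(Z,\lambda)\to(M,\omega\circ\varrho)$ as an isometry, so $\omega\circ\varrho$ is genuinely a metric on $M$; and since $h$ is simultaneously a homeomorphism $(Z,\lambda)\to(M,\varrho)$, this new metric is topologically equivalent to $\varrho$. With $\omega$ already known to be one-to-one, the hypotheses of \LEM{omega} applied to the geodesic $2$-point homogeneous space $(M,\varrho)$ and $f=\omega$ are met, and its conclusion delivers $\omega\in\Omega([0,R])=\Omega_R$.

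For the transfer of homogeneity I would chain three observations. Clauses (v2)--(v3) of \LEM{omega} provide $\Iso(M,\omega\circ\varrho)=\Iso(M,\varrho)$ and the equivalence of absolute homogeneity between these two metrics on $M$; the injectivity of $\omega$ makes a partial map on $M$ a $\varrho$-isometry exactly when it is an $\omega\circ\varrho$-isometry, so $3$-point homogeneity is also shared; and the isometry $h\colon(Z,\lambda)\to(M,\omega\circ\varrho)$ transports both properties to $(Z,\lambda)$. The only point requiring real care is the middle paragraph: one must leverage the purely topological information that $h$ is a homeomorphism to promote $\omega\circ\varrho$ to a metric \emph{equivalent} to $\varrho$, which is precisely the place where the hypothesis that $h$ is a homeomorphism (and not merely a bijection intertwining the isometry groups) enters essentially.
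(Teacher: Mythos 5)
Your proposal is correct and follows essentially the same route as the paper: both derive the equivalence $\lambda(a,b)=\lambda(x,y)\iff\varrho(h(a),h(b))=\varrho(h(x),h(y))$ from $2$-point homogeneity of the two spaces together with \eqref{eqn:iso}, read off the one-to-one function $\omega$ from it, apply \LEM{omega} to $f=\omega$ to conclude $\omega\in\Omega_R$, and transfer $3$-point and absolute homogeneity through the injectivity of $\omega$. Your write-up merely makes explicit (via the well-definedness check and the inverse map $\tilde\omega$) what the paper compresses into the single equivalence \eqref{eqn:aux1}.
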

\begin{proof}
Recall that $h \times h\dd Z \times Z \to M \times M$ is given by
$(h \times h)(x,y) = (h(x),h(y))$. Further, for simplicity, denote $I =
\varrho(M \times M)$. It follows from our assumptions that $I = [0,1]$ (if $R =
1$) or $I = [0,\infty)$ (if $R = \infty$).\par
For any four points $a$, $b$, $x$ and $y$ in an arbitrary $2$-point homogeneous
metric space $(Y,p)$ the following equivalence holds:
\[p(a,b) = p(x,y) \iff \exists \Phi \in \Iso(Y,p)\dd\ \Phi(a) = x \textup{ and }
\Phi(b) = y.\]
The above condition, applied for both $(Z,\lambda)$ and $(M,\varrho)$, combined
with \eqref{eqn:iso} yields that
\begin{equation}\label{eqn:aux1}
\lambda(a,b) = \lambda(x,y) \iff \varrho(h(a),h(b)) = \varrho(h(x),h(y)) \qquad
(a,b,x,y \in Z).
\end{equation}
We infer that there exists a one-to-one function $\omega\dd I \to [0,\infty)$
such that $\lambda = \omega \circ \varrho \circ (h \times h)$. Since $\omega
\circ \varrho = \lambda \times (h^{-1} \times h^{-1})$ is a metric equivalent to
$\varrho$, we conclude from \LEM{omega} that $\omega \in \Omega_R$.
The uniqueness of $\omega$ is trivial.\par
Finally, the remainder of the lemma (about $3$-point or absolute homogeneity)
follows from \eqref{eqn:aux1}. Indeed, this condition implies that a map $u\dd
(A,\lambda) \to (Z,\lambda)$ (where $A \subset Z$) is isometric iff the map
$h \circ u \circ h^{-1}\bigr|_{h(A)}\dd (h(A),\varrho) \to (M,\varrho)$ is
isometric.
\end{proof}

\begin{proof}[Proof of \THM{classification}]
First of all, since each of the spaces
\begin{equation}\label{eqn:classic}
(\RRR^n,d_e)\ (n > 0),\ (\SSS^n,d_s)\ (n > 0),\ (H^n(\RRR),d_h)\ (n > 1)
\end{equation}
is absolutely homogeneous, it follows from \LEM{omega} that any space $(X,d)$
listed in the statement of the theorem is also absolutely homogeneous (and, of
course, connected, locally compact and has more than one point). \LEM{omega}
enables us recognizing those among them that are geodesic.\par
Further, if $(Z,\lambda)$ is a connected locally compact $3$-point homogeneous
metric space having more than one point, we infer from \THM{freu} that there are
a metric space $(M,\varrho)$ and a homeomorphism $h\dd Z \to M$ such that
\eqref{eqn:iso} holds and either $(M,\varrho)$ is listed in \eqref{eqn:classic}
or it is $(P^n(\RRR),d_p)$ with $n > 1$. However, since $(M,\varrho)$ is
$2$-point homogeneous (cf. \LEM{proj}), it follows from \LEM{2-point} that it is
also $3$-point homogeneous (because $(Z,\lambda)$ is so). Thus, \LEM{proj}
implies that $(M,\varrho)$ is listed in \eqref{eqn:classic}, and we conclude
from \LEM{2-point} that there is $\widetilde{\omega} \in \Omega(I)$ (where $I =
\varrho(M \times M)$) for which
\begin{equation}\label{eqn:h}
\lambda = \widetilde{\omega} \circ \varrho \circ (h \times h).
\end{equation}
Now if $(M,\varrho)$ is not a Euclidean space, put $\omega =
\widetilde{\omega}$ and observe that $(X,d) = (M,\omega \circ \varrho)$ is
listed in the statement of the theorem and $h\dd (Z,\lambda) \to (X,d)$ is
an isometry, thanks to \eqref{eqn:h}.\par
In the remaining case---when $(M,\varrho) = (\RRR^n,d_e)$---we proceed as
follows. There is $\alpha > 0$ such that $\widetilde{\omega}(\alpha) =
\min(1,\frac12\widetilde{\omega}(\infty))$. We define $\omega \in \Omega$ by
$\omega(t) = \widetilde{\omega}(\alpha t)$. Observe that then $\omega(\infty) =
\widetilde{\omega}(\infty)$ and hence $\omega(1) =
\min(1,\frac12\omega(\infty))$. So, $(X,d) = (\RRR^n,\omega \circ d_e)$ is
listed in the statement of the theorem. What is more, the map
\[(Z,\lambda) \ni z \mapsto \frac{1}{\alpha} h(z) \in (X,d)\]
is an isometry, again by \eqref{eqn:h} (and the definition of $\omega$).\par
The last thing we need to prove is that all the metric spaces $(X,d)$ listed in
the statement of the theorem are pairwise non-isometric. To this end, let
$(M_1,\varrho_1)$ and $(M_2,\varrho_2)$ be two spaces among listed in
\eqref{eqn:classic}, $\omega_1$ and $\omega_2$ be two functions such that
$(X,d) = (M_j,\omega_j \circ \varrho_j)$ for $j\in \{1,2\}$ is listed in
the statement of the theorem; and let $h\dd (M_1,\omega_1 \circ \varrho_1) \to
(M_2,\omega_2 \circ \varrho_2)$ be an isometry. Then
\[\Iso(M_1,\omega_1 \circ \varrho_1) = \Iso(M_1,\varrho_1) = \{h^{-1} \circ u
\circ h\dd\ u \in \Iso(M_2,\varrho_2)\}.\]
So, we infer from the uniqueness in \THM{freu} that $(M_1,\varrho_1) =
(M_2,\varrho_2)$. To simplify further arguments, we denote $(M,\varrho) =
(M_1,\varrho_1)$. Thus, $h$ is an isometry from $(M,\omega_1 \circ \varrho)$
onto $(M,\omega_2 \circ \varrho)$. We conclude that:
\[\varrho(a,b) = \varrho(x,y) \iff \varrho(h(a),h(b)) = \varrho(h(x),h(y))
\qquad (a,b,x,y \in M)\]
(because both $\omega_1$ and $\omega_2$ are one-to-one). The above condition
implies that there is a one-to-one function $f\dd I \to I$ where $I =
\varrho(M \times M)$ such that $\varrho \circ (h \times h) = f \circ \varrho$.
Since both $\varrho$ and $\varrho \circ (h \times h)$ are geodesic, it follows
from \LEM{omega} that there is a constant $c > 0$ such that $f(t) = ct$. So, $h$
is a dilation on $(M,\varrho)$. Now we consider two cases. First assume that
$(M,\varrho)$ is not a Euclidean space. Then $c = 1$, which for hyperbolic
spaces follows from \THM{dilation} and for spheres is trivial (just compare
diameters). Thus, $h \in \Iso(M,\varrho)$ and therefore $\omega_1 \circ \varrho
= \omega_2 \circ \varrho$. Consequently, $\omega_1 = \omega_2$.\par
Finally, assume $(M,\varrho) = (\RRR^n,d_e)$. We have already known that
$\varrho(h(x),h(y)) = c \varrho(x,y)$ for all $x, y \in \RRR^n$. Simultaneously,
$\omega_2(\varrho(h(x),h(y))) = \omega_1(\varrho(x,y))$ for any $x, y \in
\RRR^n$. Both these equations imply that $\omega_2(ct) = \omega_1(t)$ for any
$t \geqsl 0$. In particular, $\omega_2(\infty) = \omega_1(\infty)$ and thus
$\omega_2(c) = \omega_1(1) = \min(1,\frac12 \omega_1(\infty)) = \omega_2(1)$.
Since $\omega_2$ is one-to-one, we get $c = 1$ and hence $\omega_2 = \omega_1$.
\end{proof}

\end{document}